\documentclass{amsart}

\usepackage[utf8]{inputenc} 
\usepackage[T1]{fontenc} 
\usepackage{amsmath, amsthm, amssymb, booktabs, multirow, graphicx,
  booktabs, bm, float}

\usepackage{mathtools}
\usepackage{enumitem}
\usepackage{dialogue}
\usepackage[stable]{footmisc}
\usepackage[colorlinks]{hyperref}

\usepackage{comment} 

\newtheorem{defin}{Definition}[section]

\newtheorem{theorem}[defin]{Theorem}
\newtheorem{proposition}[defin]{Proposition}
\newtheorem{lemma}[defin]{Lemma}
\newtheorem{corollary}[defin]{Corollary}

\newcommand{\Q}{\mathbb{Q}}
\newcommand{\R}{\mathbb{R}}

\newcommand{\N}{\mathbb{N}}

\newcommand{\sphere}{S}

\newcommand{\sos}[1]{\ensuremath{\Sigma_{#1}}} 
\newcommand{\Ein}{\mathcal{E}_{\mathrm{in}}}
\newcommand{\Eout}{\mathcal{E}_{\mathrm{out}}}

\newcommand{\D}{\mathrm{d}}
\makeatletter
\newcommand{\bigperp}{%
  \mathop{\mathpalette\bigp@rp\relax}%
  \displaylimits
}

\newcommand{\bigp@rp}[2]{%
  \vcenter{
    \m@th\hbox{\scalebox{\ifx#1\displaystyle2.1\else1.5\fi}{$#1\perp$}}
  }%
}
\makeatother

\DeclareMathOperator{\Tr}{Tr}

\DeclareMathOperator{\Harm}{Harm}

\DeclareMathOperator{\poly}{poly}

\DeclarePairedDelimiter{\ordinarySet}{\{}{\}}
\newcommand{\set}[3][]{\ordinarySet[#1]{\,#2 \;#1|\; #3\,}}

\begin{document}

\title{Sums of squares in polynomial time}

\author{Nikolas G\"artner}
\address{N.~G\"artner, Department Mathematik/Informatik, Abteilung
  Mathematik, Universit\"at zu K\"oln, Weyertal~86--90, 50931 K\"oln,
  Germany}
\email{ngaertn2@smail.uni-koeln.de}

\author{Victor Magron}
\address{V.~Magron, Universit\'e de Toulouse; LAAS-CNRS, 
7 avenue du colonel Roche, F-31400 Toulouse, France}
\email{victor.magron@laas.fr}

\author{Frank Vallentin}
\address{F.~Vallentin, Department Mathematik/Informatik, Abteilung
  Mathematik, Universit\"at zu K\"oln, Weyertal~86--90, 50931 K\"oln,
  Germany}
\email{frank.vallentin@uni-koeln.de}

\date{\today}


%
%
%

\begin{abstract}
In this paper, we analyze the bit complexity of deciding whether a given
polynomial can be represented as a sum of squares of polynomials.
We show that the weak membership problem for the sum-of-squares cone lies in $\mathrm{P}$.
Furthermore, we give a polynomial-time algorithm which computes, for a given polynomial and positive parameter $\epsilon$, an $\epsilon$-relaxed closest sum-of-squares polynomial.
\end{abstract}

\maketitle

\markboth{N. G\"artner, V. Magron, and F. Vallentin}{Sum of squares in polynomial time}

\section{Introduction}

Let $p$ be a multivariate polynomial in $n$ variables of even degree $2d$. A fundamental computational task is to determine whether $p$ can be written as a sum of squares of polynomials; that is, to answer the question: 

\smallskip

\textit{Given a polynomial $p$, do there exist polynomials
$q_1, \ldots, q_m$ such that
\[
p = q_1^2 + q_2^ 2 + \cdots + q_m^2?
\]
}

\medskip

The set of polynomials that can be written as a sum of squares forms a convex cone. Thus, the task is equivalent to deciding membership in the sum-of-squares cone. Since this cone is not polyhedral---it has infinitely many extreme rays, each square defining one---it is natural from a computational complexity point of view to consider the weak membership problem, following the classical work of Gr\"otschel, Lov\'asz, and Schrijver \cite{GroetschelLovaszSchrijver1993}. 

To formalize this problem, we first specify how the input polynomial $p$ is encoded. An $n$-variate polynomial $p \in \R[x]$ in the variables $x = (x_1, \ldots, x_n)$ is a finite linear combination of monomials $x^{\alpha} = x_1^{\alpha_1} x_2^{\alpha_2} \cdots x_n^{\alpha_n}$ with real coefficients $p_{\alpha} = p_{\alpha_1, \alpha_2, \ldots, \alpha_n}$,
\[
p(x) = \sum_{\alpha} p_{\alpha} x^{\alpha}
\]
where only finitely many $p_{\alpha}$ are nonzero. If $p \neq 0$, the maximum value of $|\alpha| = \alpha_1 + \alpha_2 + \cdots + \alpha_n$ over all multi-indices $\alpha$ with $p_{\alpha} \neq 0$ is the \emph{degree} of $p$. 

Without loss of generality, we can assume that the input polynomial $p = \sum_{\alpha} p_\alpha x^\alpha$ is homogeneous, that is, $p$ can be written as the sum
$p = \sum_{|\alpha| = {2d}} p_\alpha x^\alpha$, in which all occurring monomials have degree $2d$. Indeed, a polynomial $p$ can be written as a sum of squares if and only if its homogenization
$x^{2d}_{n+1} p(x_1/x_{n+1}, \ldots, x_n/x_{n+1})$ can be written as a sum of squares.  Hence, throughout the paper we assume $n \geq 2$, since deciding whether a homogeneous univariate polynomial is a sum of squares is trivial.

We denote the vector space of homogeneous polynomials of degree $d$ by $\R[x]_d$. This is the vector space of \emph{$d$-forms}. Its dimension equals
\[
N = N(n,d) = \binom{n+d-1}{n-1} = \binom{n+d-1}{d}.
\] 
We represent a $d$-form by its coefficient vector $(p_{\alpha}) \in \R^{N(n,d)}$. 

We define the \emph{sum-of-squares cone}
\[
\Sigma_{n,2d} = \Big\{ (p_\alpha) : p = \sum_{\alpha} p_{\alpha} x^{\alpha} \text{ is a sum of squares} \Big\} \subseteq \R^{N(n,2d)}.
\] 
The \emph{weak membership problem} of $\Sigma_{n,2d}$ is defined as follows: Given a rational vector $(p_\alpha) \in \Q^{N(n,2d)}$ and a rational number $\epsilon > 0$, conclude with one of the following:
\begin{enumerate}
\item ($(p_\alpha)$ is $\epsilon$-close to $\Sigma_{n,2d}$) \hspace*{1ex} $\|(p_\alpha) - (q_\alpha)\| \leq \epsilon$ for some $(q_\alpha) \in \Sigma_{n,2d}$,
\item ($(p_\alpha)$ is not $\epsilon$-deep in $\Sigma_{n,2d}$) \hspace*{1ex} $\|(p_\alpha) - (q_\alpha)\| \leq \epsilon$ for some $(q_\alpha) \not\in \Sigma_{n,2d}$.
\end{enumerate}
Here, $\| \cdot \|$ denotes the standard Euclidean norm on $\R^{N(n,2d)}$.

\medskip

Our first main result is the following:

\begin{theorem}
\label{th:weak-membership}
The weak membership problem for the sum-of-squares cone can be decided in polynomial time.
\end{theorem}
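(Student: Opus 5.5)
We will reduce the weak membership problem to a semidefinite feasibility problem and then use the ellipsoid method of Gr\"otschel, Lov\'asz, and Schrijver. The complexity is measured in the encoding length of $(p_\alpha)$ and $\epsilon$, and is polynomial in $N(n,2d)$ and $N(n,d)$.

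Write $m(x)$ for the vector of the $N(n,d)$ monomials of degree $d$. A $2d$-form $p$ lies in $\Sigma_{n,2d}$ if and only if $p = m(x)^{\sf T} Q\, m(x)$ for some positive semidefinite $Q$. Thus $\Sigma_{n,2d} = A(\mathcal{S}_+^{N(n,d)})$, where $A$ is the linear map $Q \mapsto (\sum_{\beta+\gamma=\alpha} Q_{\beta\gamma})_\alpha$. Weak membership of $p$ can therefore be phrased as a convex question about the closed convex set $K = \Sigma_{n,2d}$.

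The plan is to turn $K$ into a well-described convex body and apply the GLS equivalence: a weak separation (or weak optimization) oracle yields weak membership in oracle-polynomial time.

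\textbf{Localization.} Since $p$ is given, we only need $K \cap B(p, R)$ for a suitable $R$. We also need an inner ball. For this we use the form $(x_1^2+\cdots+x_n^2)^d$, which lies in the interior of $\Sigma_{n,2d}$. It equals $m^{\sf T}Q_0m$ with $Q_0$ positive definite, with eigenvalues bounded below by an explicit quantity of bit size polynomial in $N$. Hence a ball of explicit radius $r$ around it is contained in $K$.

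We also need an a priori bound on the Gram matrices. For any Gram matrix $Q$ of $q$ we have $\Tr Q = \sum_{\beta} Q_{\beta\beta}$, and the diagonal entries are exactly the coefficients $q_{2\beta}$. So $\Tr Q \le \sqrt{N}\,\|q\|$, and $Q \succeq 0$ then gives $\|Q\|_F \le \Tr Q$. Consequently every $q \in K$ with $\|q\| \le R$ has a Gram matrix in an explicit ball. The set
\[
\{ Q \succeq 0 : \|A(Q)\| \le R \}
\]
is compact with explicit radius, and $K\cap B(0,R)$ is its image under $A$.

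\textbf{Weak optimization.} Given a rational direction $c$, we need to approximately maximize $\langle c, A(Q)\rangle = \langle A^*(c), Q\rangle$ over the compact spectrahedron above. This is weak optimization of a linear function over a bounded spectrahedron with explicit bounds.

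Two steps make this polynomial time. First, PSD membership can be weakly separated in polynomial time: compute an approximate $LDL^{\sf T}$ factorization or an approximate smallest eigenvector in rational arithmetic. Second, the ellipsoid method needs an inner ball. We obtain one by intersecting with a slightly shrunk or shifted set, for example by working with $Q \succeq \delta I$, or by using the interior point $Q_0$ noted above.

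Once weak optimization over $K\cap B(0,R)$ is available, GLS (Theorem 4.3.2 and its corollaries on weak membership from weak optimization) gives a weak membership oracle for $K$ in oracle-polynomial time. We apply it with tolerance $\epsilon$ and output one of the two alternatives.

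\textbf{Main obstacle.} The main obstacle is the bit-complexity bookkeeping. We must show that all radii $R$ and $r$, the eigenvalue lower bound for $Q_0$, and the error propagation through $A$ have encoding length polynomial in $N(n,2d)$ and $\langle \epsilon\rangle$. Two points need care:

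1. The lower bound on $\lambda_{\min}(Q_0)$ for a suitable Gram matrix of $(\sum x_i^2)^d$. For this, try the diagonal Gram matrix given by the multinomial coefficients $\binom{d}{\beta}$, which are at least $1$.
2. The constant of $A$, whose norm is at most $\sqrt{N(n,d)}$.

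Once these are controlled, the rest is a direct application of the ellipsoid framework.
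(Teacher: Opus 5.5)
\emph{There is a genuine gap in your outer bound on Gram matrices.} The rest of your plan differs from the paper's but is sound. You lift to Gram matrices and localize the cone itself to $\Sigma_{n,2d}\cap B(0,R)$. With $R\ge\|p\|+\epsilon$ and $R\ge\|\omega^d\|+1$, weak membership for this body gives weak membership for the cone. The paper instead works on the normalized section $\overline{\Sigma_{n,2d}}$, takes its balls from Blekherman's Loewner--John computations, and does weak optimization through the generalized eigenvalue problem for $(C(f),M)$. Your inner ball is correct and more elementary than the paper's. $\mathrm{diag}(d!/\beta!)$ is a Gram matrix of $\omega^d$ with smallest eigenvalue at least $1$. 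Placing each coefficient $c_\gamma$, split symmetrically, on one entry $(\beta,\beta')$ with $\beta+\beta'=\gamma$ gives a right inverse of your map $A$ of norm at most $1$. Hence the standard-norm ball of radius $1$ around $\omega^d$ lies in $\Sigma_{n,2d}$, with no appeal to Blekherman.

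The gap is the a priori bound on Gram matrices, which you need as the outer ball of $\{Q\succeq 0:\|A(Q)\|\le R\}$. It is false that the diagonal entries of a Gram matrix are the coefficients $q_{2\beta}$. The coefficient of $x^{2\beta}$ is $\sum_{\beta'+\beta''=2\beta}Q_{\beta'\beta''}$, which also contains off-diagonal entries. For $n=d=2$ and $m(x)=(x^2,xy,y^2)$, take
\[
Q=\begin{pmatrix} a&0&-a\\ 0&2a&0\\ -a&0&a\end{pmatrix}\succeq 0 .
\]
Then $m(x)^{\sf T}Q\,m(x)=a(x^4+y^4)$, so $\Tr Q=4a$ while $\|q\|=\sqrt2\,a$. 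Hence $\sqrt N\,\|q\|\le\sqrt{10}\,a<4a$ whether $N=N(2,2)$ or $N=N(2,4)$, and your inequality $\Tr Q\le\sqrt N\,\|q\|$ fails.

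The missing idea is to control the trace through a positive definite moment functional. For $Q\succeq 0$,
$\lambda_{\min}(M)\Tr Q\le\langle Q,M\rangle=\int_{S^{n-1}}q\,\D\sigma=\sum_\beta M_{\beta\beta}\,q_{2\beta}\le\|q\|$.
The last step uses $\Tr M\le 1$, which follows from $\sum_\beta x^{2\beta}\le\omega^d$. This gives $\|Q\|_F\le\|q\|/\lambda_{\min}(M)$.

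You then need a lower bound on $\lambda_{\min}(M)$ of polynomial bit size. This is the real obstacle, not $\lambda_{\min}(Q_0)$ or $\|A\|$, which you flagged but which are the easy part. It is exactly what the paper prepares: efficient equivalence of the $L^2$ and standard norms plus Rayleigh--Ritz, which enter Lemma~\ref{lem:relation-L2-Frobenius-norm} and the outer radius in Lemma~\ref{lem:generalized-eigenvalue-sdp}. A cheaper alternative is $\lambda_{\min}(M)\ge\det M/\lambda_{\max}(M)^{N-1}\ge\det M$, where $\det M>0$ is a rational computed exactly by Gaussian elimination. With this repair, the rest of your plan goes through.
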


Here, polynomial-time decidability means that there exists a Turing machine that solves the weak membership problem in a number of steps bounded by a polynomial in the bit-length of the input $(p_\alpha) \in \Q^{N(n,2d)}$ and $\epsilon \in \Q$. 

\medskip

However, solving the weak membership problem does not, and cannot, provide a sum-of-squares decomposition if $p$ lies in $\Sigma_{n,2d}$. 

\smallskip

From a practical point of view, the test whether $p$ lies in $\Sigma_{n,2d}$ can be formulated as checking the feasibility of a semidefinite optimization problem using the Gram matrix method, first proposed by Choi, Lam, and Reznick \cite{ChoiLamReznick1995}: A homogeneous polynomial $p \in \R[x_1, \ldots, x_n]$ in $n$ variables of degree $2d$ has a sum-of-squares representation if and only if there is a positive semidefinite matrix $A$ such that
\begin{equation}
\label{eq:Gram}
p(x) = [x]_{d}^{\sf T} A [x]_{d}, \quad \text{with} \quad A \in \mathcal{S}^{N(n,d)}_+,
\end{equation}
where $[x]_{d} \in \R[x_1, \ldots, x_n]^{N(n,d)}$ is the vector of all monomials of degree $d$. In this case, we say that $A$ is a Gram matrix representation of $p$. However, the computational complexity of general semidefinite optimization problems is not yet fully understood. It is unknown whether one can solve---for rational input data---a semidefinite program in polynomial-time in the Turing machine bit model.

\smallskip

Our second main result establishes a polynomial-time algorithm that, for rational polynomials satisfying a normalization condition, computes an approximation of a closest polynomial admitting a rational, positive definite Gram matrix representation.

\begin{theorem}
\label{th:projection-sos}
Let $p$ be a rational, homogeneous polynomial with
$\int_{S^{n-1}} p(x) \, \D \sigma(x) = 1$.
Let $\epsilon > 0$ be a rational parameter. Then one can compute, in polynomial time, a positive definite matrix $A$ so that 
\[
\begin{split}
\|((p - [x]_{d}^{\sf T} A [x]_{d})_\alpha)\| \leq \min\Big\{\|((p-q)_\alpha)\| \; : & \; (q_\alpha) \in \Sigma_{n,2d},\\
& \; \int_{S^{n-1}} q(x)\, \D \sigma(x) = 1\Big\} + \epsilon,
\end{split}
\]
where $\sigma$ is the unique rotationally invariant probability measure on the unit sphere.
\end{theorem}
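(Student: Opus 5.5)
We plan to reduce Theorem~\ref{th:projection-sos} to a projected-gradient (or Frank--Wolfe) scheme over the compact convex set
\[
K = \set{A \in \mathcal{S}^{N(n,d)}_+}{\int_{S^{n-1}} [x]_d^{\sf T} A [x]_d \, \D\sigma(x) = 1},
\]
which is the image-preimage of the normalized slice of $\Sigma_{n,2d}$ under the Gram map $A \mapsto [x]_d^{\sf T} A [x]_d$. The normalization is linear in $A$: it reads $\langle M, A\rangle = 1$ with $M = \int_{S^{n-1}} [x]_d [x]_d^{\sf T}\,\D\sigma$, the (rational, explicitly computable via Gamma-function moment formulas) moment matrix of the sphere, which is positive definite. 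Hence $K$ is a spectrahedral slice bounded in norm by a constant depending polynomially on $N$ and $\lambda_{\min}(M)^{-1}$. The objective $f(A) = \|((p - [x]_d^{\sf T} A [x]_d)_\alpha)\|^2$ is a convex quadratic in $A$ with Lipschitz gradient, its Lipschitz constant bounded by the squared operator norm of the linear coefficient map, which is at most $N(n,d)^2$.

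The key steps are as follows. First, we compute $M$ exactly, or to sufficient rational precision, and change coordinates $B = M^{1/2} A M^{1/2}$ (approximately, via a rational Cholesky factor $L$ of $M$), so that $K$ becomes the spectraplex $\set{B \succeq 0}{\Tr B = 1}$. Second, we run Frank--Wolfe on the spectraplex. Each step only needs a leading eigenvector of the symmetric gradient matrix $-\nabla f(B)$, which can be approximated to any rational accuracy in polynomial time by power iteration or bisection on characteristic polynomials. The update is $B_{k+1} = (1-\gamma_k) B_k + \gamma_k vv^{\sf T}$. The standard convergence bound $f(B_k) - f^* \le 2 C_f/(k+2)$, which is robust to additive errors in the linear minimization oracle, gives accuracy $\epsilon^2$ (and then, via $\sqrt{\cdot}$ and the triangle inequality, $\epsilon$ in the norm) after $O(C_f/\epsilon^2)$ iterations. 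This count is polynomial in $1/\epsilon$, which we must upgrade to polynomial in $\log(1/\epsilon)$: either by restricting to rounding to $\poly$ bit sizes and using an ellipsoid method on $K$ (a well-bounded convex set with a known interior point $M^{-1}/N$ and a separation oracle given by eigenvalue computation), or by the Gr\"otschel--Lov\'asz--Schrijver machinery for weak convex minimization, which yields time polynomial in $\log(1/\epsilon)$. I would use the ellipsoid route, since the presence of $\epsilon$ in the input in binary forces it. Third, to guarantee strict positive definiteness and rationality, we replace the output $A$ by $(1-\delta)A + \delta M^{-1}/N$ with a small rational $\delta$. This stays in $K$, is positive definite, and changes $f$ by at most $O(\delta)$.

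The main obstacle is bit-size control: we must ensure that all intermediate matrices (ellipsoid centers, rounded points, the Cholesky factor of $M$) have polynomially bounded encoding length. We must also ensure that approximate feasibility, meaning a slightly violated trace constraint or slightly negative eigenvalues, can be repaired. For the latter we would certify that $K$ contains the ball of radius $r = \poly^{-1}$ around $M^{-1}/N$ in its affine hull and lies in a ball of radius $R = \poly$, which makes $K$ well-bounded and centered. Then a point that is near-feasible at precision $\eta$ can be pulled toward the center by a convex combination, costing $O(\eta R/r)$ in the objective. Choosing $\eta$ with polynomially many bits finishes the argument.
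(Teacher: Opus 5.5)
Your proposal is correct once you commit to the ellipsoid route. The Frank--Wolfe detour can simply be dropped, since by itself it only gives $\poly(1/\epsilon)$ iterations; the approximate Cholesky change of variables goes with it, and it is unnecessary anyway because the exact factor is generally irrational. The remaining skeleton matches the paper's: parametrize $\overline{\sos{n,2d}}$ by Gram matrices in $K=\{A\succeq 0:\langle M,A\rangle=1\}$ and invoke the Gr\"otschel--Lov\'asz--Schrijver machinery on a well-bounded set. The ingredients differ, though. The paper writes an SDP with an epigraph variable $t$ and a linear objective. It centers at the Hilbert-identity Gram matrix $X_0$ of $\omega^d$, takes the radii from Blekherman's $L^2$ balls transported to the Frobenius norm via Lemma~\ref{lem:relation-L2-Frobenius-norm} (which rests on the reverse H\"older inequality), and quotes Theorem~\ref{th:sdppolynomialtime}. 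You instead minimize the convex quadratic directly, center at $M^{-1}/N$, and get the radii from purely spectral bounds. These bounds are exactly Lemma~\ref{lem:generalized-eigenvalue-sdp} with $B=M$, giving $r=1/(N\lambda_{\max}(M))$ and $R=2\sqrt{N}/\lambda_{\min}(M)$. Here $\log(1/\lambda_{\min}(M))$ is polynomially bounded because $M$ is rational with polynomial-size entries, or alternatively by Lemma~\ref{lem:l2-vs-standard}. You should state and prove this rather than say you ``would certify'' it, but it is routine. Your version is more elementary and targets the quantity that actually controls an inner ball in Gram-matrix space, namely the smallest eigenvalue of the center. A ball around $\omega^d$ in coefficient space does not by itself produce a Frobenius ball of positive semidefinite Gram matrices. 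For the paper's $X_0=B(n,2d)^{-1}DMD$ with $D=\mathrm{diag}(d!/\alpha!)$, one still needs the separate bound $\lambda_{\min}(X_0)\ge B(n,2d)^{-1}\lambda_{\min}(M)$. Your final convex combination with $M^{-1}/N$ also explicitly yields an exactly feasible, positive definite rational output. The statement demands this, and the paper leaves it implicit, since Theorem~\ref{th:sdppolynomialtime} returns only a point of $\mathcal{F}$. In exchange, the paper's version reuses the same center and balls as the proof of Theorem~\ref{th:weak-membership}. It also needs only a linear-objective SDP theorem rather than weak constrained convex function minimization, though you could equally pass to the epigraph form.
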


\subsection{Structure of the paper}

The remainder of the paper is devoted to the proofs of the main theorems. The proof of Theorem~\ref{th:weak-membership} is presented in Section~\ref{sec:weak-membership}, while the proof of Theorem~\ref{th:projection-sos} is given in Section~\ref{sec:projection-sos}.

Our arguments combine several well-known techniques and results from convex optimization, semidefinite programming, and real algebraic geometry.

In Section~\ref{sec:fine-print}, we review the polynomial-time optimization methods based on the ellipsoid method that we shall use throughout the paper. Our presentation follows the now classical monograph of Gr\"otschel, Lov\'asz, and Schrijver~\cite{GroetschelLovaszSchrijver1993}.

Section~\ref{sec:preparation} contains the main preparatory material. Using the theory of spherical harmonics, we show that three natural inner products on $\R[x]_d$ are efficiently equivalent. We also show that all properties of the moment matrix $M$ required later in the paper can be computed in polynomial time. Furthermore, we recall a compactification of the cone of sums of squares due to Blekherman~\cite{Blekherman2004,Blekherman2006} and review the corresponding properties of the associated Loewner--John ellipsoids. Finally, we show that generalized eigenvalue problems arising in our setting can be solved in polynomial time.

\subsection{Related work}

The computational complexity of sums of squares, particularly in connection with hierarchies for polynomial optimization, has been studied by O'Donnell \cite{ODonnell2016} and by Raghavendra and Weitz \cite{Raghavendra2017}. Both works show that, when one simultaneously optimizes several polynomials admitting sum-of-squares representations, the coefficients required in a certificate may have doubly exponential size. Consequently, the bit-size of a standard representation can become exponential. They also identify important classes of problems for which this phenomenon does not occur and for which polynomial-time solvability can be established using the ellipsoid method. This line of research was extended by Gribling, Polak, and Slot \cite{Gribling2023}, who identified algebraic and geometric conditions guaranteeing polynomial-time computability. Related complexity issues for sum-of-squares bounds in copositive programming are discussed by Palomba, Slot, Vargas, and Mastrolilli \cite{Palomba2025}.

\smallskip

Historically, Powers and W\"ormann \cite{PowersWoermann1998} were the first to discuss algorithmic approaches how to find a Gram matrix $Q$.
Peyrl and Parrilo \cite[Proposition~4]{Peyrl2008} proved that the existence of rational $q_1,\ldots,q_m$
satisfying $p=q_1^2+\cdots + q_m^2$ is equivalent to the existence of a rational Gram matrix of $p$.
For polynomials lying in the interior of the cone of sum of squares, Hillar \cite{Hillar2009} showed that if a rational polynomial $p$ admits an invertible Gram matrix, then there is a Gram matrix for $p$ with rational coefficients.

Combining, \cite[Proposition~4]{Peyrl2008} with Theorem~\ref{th:projection-sos} yields the following corollary. Details are worked out in \cite[\S~4.3]{Gaertner2026}.

\begin{corollary}
\label{cor:exact-sos}
Let $\epsilon >0$ be rational and suppose that $p\in \R[x]_{2d}$ is a rational polynomial which admits a Gram matrix $A$ with smallest eigenvalue at least $\epsilon$. Then, one can compute a rational Gram matrix $A'$ of $p$ in polynomial time.
\end{corollary}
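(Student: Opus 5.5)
Since $p$ admits a Gram matrix $A$ with $\lambda_{\min}(A)\geq\epsilon$, it lies well inside $\Sigma_{n,2d}$. The plan is to compute, via Theorem~\ref{th:projection-sos}, a rational positive definite matrix $\tilde A$ whose polynomial is extremely close to $p$. We then project $\tilde A$ orthogonally onto the affine space of Gram matrices of $p$, following the rounding-and-projection idea of Peyrl and Parrilo \cite[Proposition~4]{Peyrl2008}. If the approximation is good enough, the projected matrix will still be positive semidefinite, and it is rational by construction.

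First normalize. Compute $c=\int_{S^{n-1}}p\,\D\sigma$, which is rational and computable in polynomial time from the spherical moments $\int x^\alpha\,\D\sigma$. These moments are explicit products of Gamma-function ratios and are rational after normalization. We have $c>0$, since $p$ is nonzero and a sum of squares. Set $\hat p=p/c$; then $\hat p$ lies in the normalized sum-of-squares set, so the minimum in Theorem~\ref{th:projection-sos} is $0$. Apply the theorem with a rational accuracy $\delta$, chosen below, to obtain a positive definite $\tilde A$ with $\|(\hat p-[x]_d^{\sf T}\tilde A[x]_d)_\alpha\|\leq\delta$. The theorem is bit-model polynomial, so we may assume $\tilde A$ is rational; otherwise we round its entries to rationals, which perturbs $\tilde A$ by a controlled amount.

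Next, project. Let $\mathcal L=\{B\in\mathcal S^{N(n,d)}:[x]_d^{\sf T}B[x]_d=\hat p\}$. This is an affine subspace defined by linear equations with $0/1$ coefficients, one equation per $\alpha$ summing the entries $B_{\beta\gamma}$ with $\beta+\gamma=\alpha$. Let $\Pi$ be the Frobenius-orthogonal projection onto $\mathcal L$; it has a closed form, because the constraint rows have disjoint supports. Concretely, $\Pi(\tilde A)_{\beta\gamma}=\tilde A_{\beta\gamma}+(\hat p_\alpha-r_\alpha)/m_\alpha$, where $r$ is the coefficient vector of $[x]_d^{\sf T}\tilde A[x]_d$ and $m_\alpha$ is the number of pairs $(\beta,\gamma)$ with $\beta+\gamma=\alpha$. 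Thus $\Pi(\tilde A)$ is rational, of polynomial bit size, and satisfies $\|\Pi(\tilde A)-\tilde A\|_F\leq\|\hat p-r\|\leq\delta$. Weyl's inequality then gives $\lambda_{\min}(\Pi(\tilde A))\geq\lambda_{\min}(\tilde A)-\delta$. Output $A'=c\,\Pi(\tilde A)$.

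The main obstacle is that Theorem~\ref{th:projection-sos} only guarantees that $\tilde A$ is positive definite. Its smallest eigenvalue could be far below $\delta$, so it need not survive the projection. To fix this, use the hypothesis. Let $\hat A=A/c$, which satisfies $\lambda_{\min}(\hat A)\geq\epsilon/c$. Instead of applying the theorem to $\hat p$ directly, apply it to the shifted polynomial
\[
\hat p_\eta=\frac{\hat p-\eta\,\|[x]_d\|^2}{1-\eta}, \qquad \|[x]_d\|^2=[x]_d^{\sf T}I[x]_d,
\]
with $\eta$ a small rational multiple of $\epsilon/c$. The factor $1-\eta$ restores the normalization, using that $\int_{S^{n-1}}\|[x]_d\|^2\,\D\sigma=1$. (If this integral is not $1$, adjust the constant in the same way with the explicitly computable moment.) Because $\hat A-\eta I\succeq0$, the polynomial $\hat p_\eta$ is still a normalized sum of squares. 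Obtain $\tilde A_\eta$ from the theorem applied to $\hat p_\eta$, and form $\tilde A=(1-\eta)\tilde A_\eta+\eta I$. This matrix satisfies $\lambda_{\min}(\tilde A)\geq\eta$ and approximates $\hat p$ to within $(1-\eta)\delta$. Choosing $\delta<\eta$, for instance $\delta=\eta/2$, makes $\Pi(\tilde A)$ positive definite. All quantities $\eta$, $\delta$ and $c$ have polynomial bit size in the input, so the algorithm runs in polynomial time. Finally, multiplying by $c$ yields a rational Gram matrix of $p$.
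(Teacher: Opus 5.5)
Your proposal is correct and follows the route the paper indicates: obtain an approximate positive definite Gram matrix from Theorem~\ref{th:projection-sos}, then apply the Peyrl--Parrilo orthogonal projection onto the affine space of Gram matrices of $p$. Your shift by $\eta I$, which uses the hypothesis $\lambda_{\min}(A)\geq\epsilon$, correctly supplies the spectral margin that the theorem alone does not guarantee. The normalizing constant should be $\int_{S^{n-1}}[x]_d^{\sf T}[x]_d\,\D\sigma=\Tr(M)$, which is not $1$ for $d\geq 2$, so divide by $1-\eta\Tr(M)$; this is at least $1/2$ for $\eta=\epsilon/(2c)$, because $c=\langle A,M\rangle\geq\epsilon\Tr(M)$.
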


In contrast, Scheiderer \cite{Scheiderer2016} proved that there are rational polynomials on the boundary of the cone of sums of squares which do not admit such a representation with rational $q_1,\ldots,q_m$, thereby answering a question raised by Sturmfels.

As an additional by-product, our techniques yield a complexity result about relaxations of polynomial
optimization problems on the sphere $\sphere^{n-1}$. 
Given $p\in \R[x]_{2d}$, let $p_{\min}$ be the minimum of $p$ over $\sphere^{n-1}$, that can be formulated as follows:
%
\begin{align*}
p_{\min}
=
\min_{\mu \in \mathcal{M}_+(\sphere^{n-1})}
\set[\bigg]{
  \int_{\sphere^{n-1}}\, p(x)\, \D \mu(x)
}{
  \int_{\sphere^{n-1}}\, \D \mu(x) = 1
}
,
\end{align*}
where $\mathcal{M}_+(\sphere^{n-1})$ denotes the set of nonnegative Borel measures supported on $\sphere^{n-1}$. 
Instead of optimizing over the full set $\mathcal{M}_+(\sphere^{n-1})$, the framework developed by Lasserre in \cite{Lasserre2011} consists of optimizing over measures of the form $f(x)\, \D \mu(x)$, where $\mu$ is a fixed reference measure with support being exactly $\sphere^{n-1}$ and $f$ is a sum of squares.  
This yields the so-called Lasserre \textit{hierarchy of upper bounds} for $p_{\min}$, indexed by $t \in \N$ with $t \geq d$:
\begin{align}
\label{eq:upperhierarchy}
p_{\min}^{(t)}
=
\min_{f \in \sos{\leq 2t}}
\set[\Big]{
  \int_{\sphere^{n-1}}\, p(x) f(x)\, \D \sigma(x)
}{
  \int_{\sphere^{n-1}}\, f(x)\, \D \sigma(x) = 1
}
,
\end{align}
where $\sos{\leq 2t}$ is the cone of (not necessarily homogeneous) sums of squares of degree at most $2t$.
\begin{corollary}\label{cor:lasserre-hierarchy}
Let $p\in \R[x]_{2d}$ be a rational polynomial.
For every rational $\epsilon > 0$,  
each upper bound $p_{\min}^{(t)}$ of the minimum of $p$ over the unit sphere $\sphere^{n-1}$ can be $\epsilon$-approximated in polynomial time in $N(n,t)$ and in the bit sizes of $\epsilon$ and $(p_\alpha)$. 
\end{corollary}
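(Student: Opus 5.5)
We need to prove Corollary~\ref{cor:lasserre-hierarchy}: the upper bound $p_{\min}^{(t)}$ in \eqref{eq:upperhierarchy} can be $\epsilon$-approximated in time polynomial in $N(n,t)$ and the bit sizes of $\epsilon$ and $(p_\alpha)$. The plan is to recast \eqref{eq:upperhierarchy} as a linear optimization problem over a compact, well-bounded convex set. That set is essentially Blekherman's compactification of the sum-of-squares cone, for which the Loewner--John ellipsoids are known (Section~\ref{sec:preparation}). We then apply the ellipsoid method in the Gr\"otschel--Lov\'asz--Schrijver form of Section~\ref{sec:fine-print}.

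\emph{Reduction to forms and a compact section.} A (not necessarily homogeneous) sum of squares $f$ of degree at most $2t$ restricted to $S^{n-1}$ agrees there with a form of degree $2t$. We obtain it by homogenizing each square using $\|x\|^2=1$: each monomial of degree $k\le t$ in $q_i$ is multiplied by $\|x\|^{t-k}$ after splitting $q_i$ into its even and odd parts. The parity issue needs care, since odd and even degrees cannot be mixed by multiplication with powers of $\|x\|^2$. I would handle this as follows. Write $q_i=q_i^{e}+q_i^{o}$ into parts of degree parity matching $t$ and not matching $t$. The cross terms $2q_i^eq_i^o$ are odd polynomials and integrate to zero against the even polynomial $p$ with respect to the symmetric measure $\sigma$. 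Hence the objective and the constraint depend only on $(q_i^e)^2+(q_i^o)^2$. The term $q_i^e$ homogenizes to degree $t$. The term $q_i^o$ homogenizes to degree $t-1$ and, multiplied by $\|x\|^2$, becomes a square-times-$\|x\|^2$, i.e.\ a sum of squares $\sum_j (x_jq)^2$ of forms of degree $t$. Thus \eqref{eq:upperhierarchy} equals
\[
\min\Big\{\int_{S^{n-1}} p f\,\D\sigma : f\in\Sigma_{n,2t},\ \int_{S^{n-1}} f\,\D\sigma=1\Big\}.
\]
The feasible set is exactly the compact base $K$ of $\Sigma_{n,2t}$ used in Blekherman's compactification. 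The objective $f\mapsto \int pf\,\D\sigma$ is linear in the coefficients of $f$. Its coefficient vector is obtained from the moment matrix $M$ of $\sigma$, which has rational entries computable in polynomial time, as recalled in Section~\ref{sec:preparation}.

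\emph{Ellipsoid method.} By the results on Loewner--John ellipsoids of Blekherman's compactification, $K$ lies in an affine hyperplane and contains a ball and is contained in a ball around the center $\|x\|^{2t}$, with radii whose bit sizes are polynomial in $N(n,t)$. The inner products are equivalent efficiently, so these balls transfer to the coefficient norm. Theorem~\ref{th:weak-membership} provides a polynomial-time weak membership oracle for $\Sigma_{n,2t}$, and hence for $K$ after intersecting with the hyperplane. By the Gr\"otschel--Lov\'asz--Schrijver theorem, weak membership together with a known inner ball and outer radius yields weak optimization in oracle-polynomial time. Optimizing the linear objective over $K$ up to $\epsilon$ then gives the desired approximation. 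All data are polynomial in $N(n,t)$ and the bit sizes of $\epsilon$ and $(p_\alpha)$. The error from weak optimization must be controlled by the norm of the objective vector, which is bounded by $\|(p_\alpha)\|$ times a factor from $M$ of polynomial bit size, so we rescale $\epsilon$ accordingly.

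\emph{Main obstacle.} The delicate step is the first one: proving rigorously that the inhomogeneous hierarchy coincides with optimization over $\Sigma_{n,2t}$ intersected with the normalization hyperplane, since the parity and homogenization argument must preserve the sum-of-squares property. A second, more technical point is working within the affine hyperplane, which is lower-dimensional, when invoking the GLS machinery. I would handle this by parametrizing the hyperplane with a rational affine map and checking that the ball radii change only polynomially.
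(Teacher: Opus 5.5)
Your proof is correct. Your first step is exactly the reduction needed: you split each $q_i$ by degree parity, discard the odd cross terms because they integrate to zero against the even form $p$, and absorb the degree-$(t-1)$ part via $h^2=\sum_j (x_jh)^2$ on $S^{n-1}$. This identifies $p_{\min}^{(t)}$ with $\min\{\int pf\,\D\sigma : f\in\overline{\Sigma_{n,2t}}\}$.

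You diverge from the paper in the optimization step. The paper defers the details to \cite[\S~4.4]{Gaertner2026}, but its own tools finish in one line. Write $f=[x]_t^{\sf T}X[x]_t$ with $X\succeq 0$. Then $\int f\,\D\sigma=\langle M,X\rangle$, with $M$ the moment matrix in degree $t$, and $\int pf\,\D\sigma=\langle C_p,X\rangle$, with $(C_p)_{\alpha,\beta}=\int p\,x^{\alpha+\beta}\,\D\sigma$. So $-p_{\min}^{(t)}$ is the value of the generalized eigenvalue program~\eqref{eq:generalized-eigenvalue-sdp} with $B=M$ and $C=-C_p$, exactly as in Section~\ref{sec:weak-membership}. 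Theorem~\ref{th:generalized-eigenvalue-sdp}, with the balls of Lemma~\ref{lem:generalized-eigenvalue-sdp}, then returns an $\epsilon$-approximation directly. Neither Blekherman's radii nor a reparametrization of the hyperplane is needed.

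Your reduction is what makes this possible. On all monomials of degree at most $t$, the moment matrix of $\sigma$ is singular for $t\ge 2$, because $x_1^2+\cdots+x_n^2-1$ vanishes on the sphere. On $\R[x]_t$ it is positive definite.

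Your route instead uses Theorem~\ref{th:weak-membership} as a black-box weak membership oracle. You then recover optimization via the Yudin--Nemirovskii theorem: membership plus a known inner ball yields optimization, by the shallow-cut ellipsoid method of Gr\"otschel--Lov\'asz--Schrijver. That is valid, but note two things:
\begin{enumerate}
\item It is not Theorem~\ref{th:poly-wopt-iff-poly-wsep}, which only relates separation and optimization.
\item Theorem~\ref{th:weak-membership} was itself derived from a polynomial-time optimization algorithm over $\overline{\Sigma_{n,2t}}$. Your chain optimization $\to$ separation $\to$ membership $\to$ optimization is therefore a detour. It costs a deeper theorem plus oracle bookkeeping: restricting to the hyperplane and rescaling $\epsilon$.
\end{enumerate}
What your approach buys is independence from the structure of the set: it would work for any well-bounded convex body given by a weak membership oracle.
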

Details are worked out in \cite[\S~4.4]{Gaertner2026}.

\section{Fine print of convex optimization and computational complexity}
\label{sec:fine-print}

Our results rely on the theory of polynomial-time equivalence between the weak separation problem and the weak optimization problem for convex bodies, which in turn is based on the ellipsoid method. This theory is presented in the book by Gr\"otschel, Lov\'asz, and Schrijver \cite{GroetschelLovaszSchrijver1993}.

Let $K$ be a nonempty convex set\footnote{In the literature, one usually works with non-empty convex sets which are compact. However, for the statement of the problems compactness is not needed.} in the Euclidean space $\R^n$, where $\R^n$ is equipped with the standard inner product $x^{\sf T} y$ and induced norm $\|x\| = \sqrt{x^{\sf T} x}$. We define the \emph{weak optimization problem} as follows: Given $c \in \Q^n$ and a rational number $\epsilon > 0$, find a vector $y \in \Q^n$ such that
\begin{enumerate}
\item ($y$ is $\epsilon$-close to $K$) $\|y - x\| \leq \epsilon$ for some $x \in K$,
\item ($y$ almost minimizes $c^{\sf T} x$ on $K$) $c^{\sf T} y - c^{\sf T} x \leq \epsilon$ for all $x \in K$. 
\end{enumerate}
We define the \emph{weak separation problem}: Given $y \in \Q^n$ and a rational number $\epsilon > 0$, conclude with one of the following:
\begin{enumerate}
\item ($y$ is $\epsilon$-close to $K$) $\|y - x\| \leq \epsilon$ for some $x \in K$,
\item ($y$ can be almost separated from $K$) find a vector $c \in \Q^n$ such that $\|c\| \geq 1$ and for every $x \in K$, $c^{\sf T} x \leq c^{\sf T} y + \epsilon$.
\end{enumerate}
We define the \emph{weak membership problem}: Given $y \in \Q^n$ and a rational number $\epsilon > 0$, conclude with one of the following:
\begin{enumerate}
\item ($y$ is $\epsilon$-close to $K$) $\|y - x\| \leq \epsilon$ for some $x \in K$,
\item ($y$ is not $\epsilon$-deep in $K$) $\|y - x\| \leq \epsilon$ for some $x \not\in K$.
\end{enumerate}

A basic theorem of the theory, which we shall need in this paper, is the following theorem, which can be found in the initial paper by Gr\"otschel, Lov\'asz, and Schrijver \cite{GroetschelLovaszSchrijver1981a}.

\begin{theorem}
\label{th:poly-wopt-iff-poly-wsep}
Let $K$ be a nonempty convex compact set in $\R^n$. Suppose we know a rational point $a_0 \in K$ and positive rational numbers $r,R$ so that 
\begin{equation}\label{inclusions-euclidean-balls}
  B(a_0,r) \subseteq K \subseteq B(a_0,R),
\end{equation}
where $B(a_0,r)$ is the ball of radius $r$ centered at $a_0$. Then there is a polynomial-time algorithm to solve the weak separation problem if and only if there is a polynomial-time algorithm to solve the weak optimization problem. Here, the polynomial is in $n$, $\log_2 \frac{R}{r}$, $\log_2(1/\epsilon)$, and the bit size of the data $a_0$, and $c \in \Q^n$, respectively, $y \in \Q^n$.
\end{theorem}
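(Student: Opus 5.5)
This is the classical Grötschel–Lovász–Schrijver equivalence, and I would prove both directions via the ellipsoid method, working throughout in the well-bounded setting $B(a_0,r)\subseteq K\subseteq B(a_0,R)$. The direction ``weak separation $\Rightarrow$ weak optimization'' is the central ellipsoid-method argument. The converse is obtained by applying the same machinery to the polar body.

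\emph{Separation $\Rightarrow$ optimization.} Start with the ellipsoid $E_0=B(a_0,R)$. At step $k$, with center $y_k$, call the weak separation oracle on $y_k$ with a small rational tolerance $\delta$.
\begin{itemize}
\item If the oracle returns a vector $c'$ with $\|c'\|\ge 1$, cut $E_k$ by the half-space $\{x : c'^{\sf T}x\le c'^{\sf T}y_k+\delta\}$. This is a \emph{feasibility cut}.
\item If the oracle asserts that $y_k$ is $\delta$-close to $K$, record $y_k$ as a candidate and cut by $\{x : c^{\sf T}x\le c^{\sf T}y_k\}$. This is an \emph{objective cut}.
\end{itemize}
Replace $E_k$ by the Löwner–John ellipsoid of the resulting half-ellipsoid, rounded to rational data. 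Its volume shrinks by a factor of at least $e^{-1/(5n)}$.

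After $O(n^2\log(R\|c\|/(r\epsilon)))$ steps the volume drops below that of a suitable set. That set is the convex hull of the point $x^*$ and $B(a_0,r)$, shrunk toward a near-optimal point $x^*$; it has volume at least roughly $(r\epsilon/(R\|c\|))^n\mathrm{vol}(B)$ and would survive all cuts up to the $\delta$-slack. It follows that some recorded candidate has objective value within $\epsilon$ of the minimum. Return the best candidate, which is $\delta$-close to $K$ by construction.

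\emph{Optimization $\Rightarrow$ separation.} Translate so that $a_0=0$ and consider the polar $K^\circ$, which satisfies $B(0,1/R)\subseteq K^\circ\subseteq B(0,1/r)$. A weak optimization oracle for $K$ gives, after rescaling tolerances, a weak separation oracle for $K^\circ$. Indeed, to separate $c$ from $K^\circ$, maximize $c^{\sf T}x$ over $K$: if the maximum is at most about $1$, then $c$ is nearly in $K^\circ$; otherwise the maximizer $x$ separates. By the first direction this yields weak optimization over $K^\circ$.

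To weakly separate $y$ from $K$, maximize $y^{\sf T}c$ over $K^\circ$.
\begin{itemize}
\item If the value is at most $1+\eta$, then $y\in(1+\eta)K$ approximately. Using $B(0,r)\subseteq K$, this shows $y$ is $\epsilon$-close to $K$ for $\eta\approx\epsilon/R$.
\item Otherwise the returned $c$ satisfies $c^{\sf T}x\le 1+\eta' < c^{\sf T}y$ for all $x\in K$. After normalizing, with $\|c\|\ge 1/R$ guaranteed up to scaling, this $c$ is the separating vector.
\end{itemize}

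\emph{Main obstacle.} The hard part is the bookkeeping of precision. First, the ellipsoid updates involve square roots, so the ellipsoids must be rounded and slightly blown up (by a factor like $1+1/(12n^2)$) to keep the encoding length polynomial while preserving volume decrease. Second, the tolerances $\delta,\eta$ must be chosen polynomially small in $\epsilon,r/R$ so that the errors from the weak oracles do not destroy the volume argument. I would handle both by following the standard central-cut ellipsoid analysis of \cite{GroetschelLovaszSchrijver1993}, proving the polynomial bound on encoding length of $E_k$ by induction on $k$.
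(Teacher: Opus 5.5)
Your proposal is correct and is precisely the classical Gr\"otschel--Lov\'asz--Schrijver argument: the ellipsoid method, with a shrunken copy of the inner ball $B(a_0,r)$ supplying the volume lower bound, gives separation $\Rightarrow$ optimization, and polarity $K \mapsto K^\circ$ with $B(0,1/R)\subseteq K^\circ\subseteq B(0,1/r)$ gives the converse; the paper does not reprove this but simply cites it from \cite{GroetschelLovaszSchrijver1981a}. One small correction to your bookkeeping: comparing $\mathrm{vol}(E_0)=R^n\,\mathrm{vol}(B)$ with the surviving ball of radius $r\epsilon/(2R\|c\|)$ gives $O\bigl(n^2\log(R^2\|c\|/(r\epsilon))\bigr)$ iterations, so the bit size of $R$ itself (not only $\log_2(R/r)$) enters the running time, as it does in the GLS encoding length of a well-bounded body.
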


It is known (see \cite[Figure 2.2]{GroetschelLovaszSchrijver1993}) that if the weak separation problem can be solved in polynomial time, then also the weak membership problem can be solved in polynomial time.

For semidefinite programs, that is, convex optimization problems of the form
\begin{equation}
\label{eq:SDP}
\mathrm{val} = \inf\{\langle C, X \rangle : X \in \mathcal{S}^n_+, \; \langle A_j, X \rangle = b_j \; (j = 1, \ldots, m)\},
\end{equation}
where symmetric matrices $C$, $A_1, \ldots, A_m \in \mathcal{S}^n$, and $b_1, \ldots, b_m \in \R$ are given as input, and the positive semidefinite matrix $X \in \mathcal{S}^n_+$ is the optimization variable, and where $\langle X, Y \rangle = \Tr(XY)$ denotes the trace inner product for symmetric matrices, the above theorem implies:

\begin{theorem}
\label{th:sdppolynomialtime}
Consider the semidefinite program~\eqref{eq:SDP}
with rational input data $C$, $A_1, \ldots, A_m$, and $b_1, \ldots, b_m$, which define the set of feasible solutions
\[
\mathcal{F} = \{ X \in \mathcal{S}^n_+ : \langle A_j, X \rangle = b_j \; (j = 1, \ldots, m) \}.
\]
Suppose that we know a rational point
$X_0 \in \mathcal{F}$ and positive rational numbers $r$, $R$ so that
\[
X_0+B(r) \subseteq \mathcal{F} \subseteq X_0 + B(R),
\]
where $B(r)$ is the ball of radius $r$, centered at the zero matrix, in the $d$-dimensional subspace
\[
L = \{X \in \mathcal{S}^n : \langle A_j, X \rangle= 0 \text{ for } j = 1,
\ldots, m\}.
\]
For every positive rational number $\epsilon>0$ one can compute, in
polynomial time, a rational matrix $X^* \in \mathcal{F}$ such that
\[
\langle C, X^* \rangle - \mathrm{val} \leq \epsilon,
\]
where the running time is bounded by a polynomial in $n$, $m$, $\log_2 \frac{R}{r}$,
$\log_2(1/\epsilon)$, and the bit size of the data $X_0$, $C$,
$A_1, \ldots, A_m$, and $b_1, \ldots, b_m$.
\end{theorem}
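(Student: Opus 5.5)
We derive Theorem~\ref{th:sdppolynomialtime} from Theorem~\ref{th:poly-wopt-iff-poly-wsep}, applied to $\mathcal{F}$ as a full-dimensional convex body inside the affine space $X_0 + L$. First we fix coordinates: from the rational data $A_1,\dots,A_m$ we compute by Gaussian elimination a rational basis $B_1,\dots,B_d$ of $L$. It has polynomial bit size and can be made orthonormal with respect to the trace inner product up to a rational change of basis whose distortion is controlled. To avoid square roots, I would keep a rational basis and absorb the distortion into modified radii $r',R'$, whose logarithms stay polynomially bounded via the bit sizes of the basis (Hadamard-type bounds on the Gram matrix of the basis). The affine map $\phi:\R^d\to X_0+L$, $y\mapsto X_0+\sum_i y_iB_i$, identifies $\mathcal{F}$ with a convex compact set $K=\phi^{-1}(\mathcal{F})\subseteq\R^d$. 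Then $B(0,r')\subseteq K\subseteq B(0,R')$, with $a_0=0$.

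Next we give a polynomial-time weak separation oracle for $K$. Given rational $y$ and $\epsilon>0$, form the rational symmetric matrix $Y=\phi(y)$. Compute an $LDL^{\sf T}$ (Gaussian elimination) factorization of $Y+\delta I$ for a suitable rational $\delta$ of order $\epsilon$. If it succeeds with nonnegative pivots, $Y+\delta I\succeq 0$, and $\phi^{-1}$ of the nearest feasible point certifies $\epsilon$-closeness. Here one needs the inner ball: mixing $Y$ with $X_0$ by a small convex combination, $(1-t)Y+tX_0$, and using $X_0+B(r)\subseteq\mathcal{F}$ shows that $X_0$ is at least $r$-deep. Hence $X_0\succeq rI$ restricted suitably, and the combination is PSD once $t\approx\delta/r$, at distance $O(tR)$ from $Y$. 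If a negative pivot occurs, elimination yields a rational vector $v$ with $v^{\sf T}(Y+\delta I)v<0$. Then the linear functional $X\mapsto -v^{\sf T}Xv$, pulled back to $\R^d$ and rescaled to norm at least $1$, almost separates $y$ from $K$, since $v^{\sf T}Xv\ge 0$ on $\mathcal{F}$. The slack $\delta\|v\|^2$ is absorbed into $\epsilon$ after normalization; one needs a lower bound on the norm of the pulled-back vector, which follows from $\|vv^{\sf T}\|$ and the inner-ball argument.

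By Theorem~\ref{th:poly-wopt-iff-poly-wsep} we then obtain weak optimization for the linear objective $y\mapsto\langle C,\phi(y)\rangle$, which is rational in $y$ up to the constant $\langle C,X_0\rangle$. This yields a rational $y^*$ that is $\epsilon'$-close to $K$ and $\epsilon'$-almost optimal. The remaining step is to turn this into a \emph{feasible} $X^*$. We again shrink toward the center, setting $X^*=(1-t)\phi(y^*)+tX_0$ with $t=\epsilon'/r$ (scaled by the basis distortion). The inner ball guarantees that $X^*\in\mathcal{F}$, and the objective changes by at most $O(t\|C\|R+\epsilon'\|C\|)$. Choosing $\epsilon'$ polynomially small in terms of $\epsilon$, $r$, $R$ and $\|C\|$ gives $\langle C,X^*\rangle-\mathrm{val}\le\epsilon$.

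I expect the main obstacle to be the bookkeeping in the separation oracle: producing an exactly rational separating vector with guaranteed norm, and proving closeness when $Y$ is only nearly PSD, using exact rational arithmetic with polynomial bit growth. The standard fix is to bound the intermediate numbers in Gaussian elimination, as in Gr\"otschel, Lov\'asz, and Schrijver.
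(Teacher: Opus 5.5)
Your overall route is the one the paper intends. The paper gives no separate argument and simply derives this statement from Theorem~\ref{th:poly-wopt-iff-poly-wsep}. Your coordinatization of $X_0+L$ and your final rounding step are correct: write $\phi(y^*)=Z+E$ with $Z\in\mathcal{F}$, $E\in L$, $\|E\|_F\le\epsilon''$; then $X^*=(1-t)Z+t\bigl(X_0+\tfrac{1-t}{t}E\bigr)\in\mathcal{F}$ as soon as $\tfrac{1-t}{t}\epsilon''\le r$.

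\textbf{The gap is in the closeness branch of your separation oracle.} The hypothesis $X_0+B(r)\subseteq\mathcal{F}$ describes a ball inside $L$ only. It does not give $X_0\succeq rI$, not even on the range of $X_0$. First, $X_0$ may be singular: the constraints $X_{13}=X_{23}=X_{33}=0$, $X_{11}+X_{22}=1$ give $X_0=\mathrm{diag}(\tfrac12,\tfrac12,0)$, and $\mathcal{F}$ is a full disc in $X_0+L$. Second, on its range the smallest eigenvalue of $X_0$ can be far below $r$. Take $n=2$ with constraints $X_{11}=1$, $X_{22}=\eta$. Then $L=\R(E_{12}+E_{21})$, $X_0=\mathrm{diag}(1,\eta)$ and $r=R=\sqrt{2\eta}$ (e.g.\ $\eta=1/50$, $r=1/5$), but $\lambda_{\min}(X_0)=\eta$. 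Let $Y\in X_0+L$ have off-diagonal entry $c$ with $c^2=(1+\delta)(\eta+\delta)$. Then $\lambda_{\min}(Y)=-\delta$ exactly, yet $(1-t)Y+tX_0\succeq 0$ needs $t\gtrsim\delta/(2\eta)=\delta/r^2$, not $t\approx\delta/r$. So, as written, ``$Y+\delta I\succeq0$ implies $y$ is $\epsilon$-close to $K$'' is not established. Being near the PSD cone does not by itself put $Y$ near $\mathcal{F}=(X_0+L)\cap\mathcal{S}^n_+$.

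\textbf{The cleanest repair is to take $\delta=0$.} Exact rational symmetric elimination decides whether $Y=\phi(y)\succeq0$ in polynomial time. If it succeeds, then $Y\in\mathcal{F}$ exactly, because $Y\in X_0+L$ by construction, so $y\in K$. If it fails, it produces a rational $v$ with $v^{\sf T}Yv<0\le v^{\sf T}Xv$ for all $X\in\mathcal{F}$, which is an \emph{exact} separation. The pulled-back vector $c_i=-v^{\sf T}B_iv$ is nonzero, since $c=0$ would force $v^{\sf T}Yv=v^{\sf T}X_0v\ge0$. Dividing by $\|c\|_\infty$ then gives a rational $c$ with $\|c\|\ge1$; there is no slack to absorb and no norm lower bound is needed. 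This is a strong separation oracle, which is more than Theorem~\ref{th:poly-wopt-iff-poly-wsep} requires.

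If you want to keep $\delta>0$, you need two extra steps:
\begin{enumerate}
\item Show that every matrix in $X_0+L$ vanishes on $\ker X_0$. For $u\in\ker X_0$ and small $A\in L$, $X_0\pm A\succeq0$ forces $Au=0$.
\item Mix with $t\approx\delta/\lambda^{+}_{\min}(X_0)$, where $\lambda^{+}_{\min}$ is the smallest nonzero eigenvalue. This is legitimate because $\lambda^{+}_{\min}(X_0)\ge 2^{-\poly(\mathrm{size}(X_0))}$, and the running time is allowed to depend on the bit size of $X_0$.
\end{enumerate}
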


Here, we use the Frobenius norm $\|X\|_F = \langle X, X \rangle^{1/2}$ for defining the balls $B(r)$ and $B(R)$.

In fact, de Klerk and Vallentin~\cite{deKlerkVallentin2016} showed that one can also prove this result using a short-step, primal interior-point method, rather than using the more theoretical ellipsoid method.

\section{Preparation}
\label{sec:preparation}

\subsection{Efficiently equivalent norms and spherical harmonics}
\label{ssec:norms}

The space $\R[x]_d$ can be endowed with a Euclidean structure by introducing an inner product. For our purposes, three different inner products will be relevant. Let $p = \sum_{|\alpha| = d} p_{\alpha} x^{\alpha}$ and $q = \sum_{|\alpha| = d} q_{\alpha} x^{\alpha}$ be homogeneous polynomials of degree $d$.

The \emph{$L^2$-inner product} is defined by
\[
(p,q)_{L^2} = \int_{S^{n-1}} p(x) q(x) \, \D \sigma(x),
\]
where $\sigma$ denotes the unique rotationally invariant probability measure on the unit sphere.

The \emph{apolar inner product}\footnote{In the literature, the apolar inner product is also referred to as the Fischer, Calder\'on, Bombieri, or Bombieri-Weyl inner product} is
\[
(p,q)_a = \frac{1}{d!} \sum_{\alpha} \alpha! p_{\alpha} q_{\alpha},
\]
where $\alpha! = \alpha_1! \alpha_2! \cdots \alpha_n!$.

Finally, the \emph{standard inner product} is
\[
(p,q) = \sum_{\alpha} p_{\alpha} q_{\alpha}.
\]

The associated induced norms are denoted by $\| \cdot \|_{L^2}$, $\| \cdot \|_{a}$, and $\| \cdot \|$, respectively. In addition, we also consider the $L^p$-norm defined by
\[
\|q\|_{L^p} = \Big(\int_{S^{n-1}} |q(x)|^p \, \D \sigma(x)\Big)^{1/p}.
\]

Since $\mathbb{R}[x]_d$ is finite-dimensional, all these norms are equivalent. However, our analysis requires quite explicit bounds on the equivalence constants. We say that two norms $\|\cdot\|$ and $\|\cdot\|'$ on $\R[x]_d$ are \emph{efficiently equivalent}
if one can compute rational constants $k, K$ in time $O(\poly(N))$ such that for every $p \in \R[x]_d$,
\[
k \|p\|' \leq \|p\| \leq K \|p\|'
\]
holds. In particular, then the bit sizes of $k$ and $K$ are bounded by $O(\poly(N))$.\footnote{In the cases considered in this paper, one can even provide analytic expressions for $k$ and $K$. Details can be found in \cite{Gaertner2026}.}

\smallskip

Now we will show that all three norms are efficiently equivalent.

\subsubsection{Standard inner product vs.\ apolar inner product}

The monomial basis of $\mathbb{R}[x]_d$ is orthogonal with respect to both the standard inner product and the apolar inner product. Consequently, we obtain the following estimate.

\begin{lemma}
\label{lem:standard-vs-apolar}
For every polynomial $p \in \R[x]_d$, one has
\[
\frac{\min\{\alpha! : |\alpha| = d\}}{d!} (p,p) \leq (p,p)_a \leq (p,p).
\]
In particular, the standard norm $\| \cdot \|$ and the apolar norm $\| \cdot \|_a$ are efficiently equivalent.
\end{lemma}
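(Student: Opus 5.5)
Since the monomials $x^\alpha$ with $|\alpha| = d$ form a common orthogonal basis for both inner products, both quadratic forms are diagonal in the coefficient vector $(p_\alpha)$:
\[
(p,p) = \sum_{|\alpha|=d} p_\alpha^2, \qquad (p,p)_a = \sum_{|\alpha|=d} \frac{\alpha!}{d!}\, p_\alpha^2 .
\]
The plan is therefore to compare the two forms coefficient by coefficient. It suffices to bound the diagonal weights $w_\alpha = \alpha!/d!$ from above and below uniformly in $\alpha$, and then sum.

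For the upper bound we show $w_\alpha \le 1$, i.e.\ $\alpha! \le d!$. This follows because the multinomial coefficient $\binom{d}{\alpha} = d!/\alpha!$ is a positive integer, hence at least $1$; alternatively, $\alpha_1!\cdots\alpha_n!$ is a product of factors taken from disjoint blocks of $\{1,\dots,d\}$, each bounded by the corresponding factors of $d!$. For the lower bound we use $w_\alpha \ge \min\{\beta! : |\beta| = d\}/d!$, which holds by definition of the minimum. Multiplying these bounds by $p_\alpha^2 \ge 0$ and summing over $\alpha$ gives both inequalities of the lemma.

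For efficient equivalence we take $K = 1$ and $k = \bigl(\min\{\alpha!\}/d!\bigr)^{1/2}$. We do not need $k$ itself to be rational: it suffices to replace it by any smaller rational. One explicit choice is $k = 1/d!$, which is valid because $\min \alpha! \ge 1$ implies $\min \alpha!/d! \ge 1/d!$, and $1/d! \le (1/d!)^{1/2}$ since $1/d! \le 1$. The only point that requires care is the bit size and computation time of this constant. The number $d!$ has bit size $O(d\log d)$, and $d \le N(n,d) = \binom{n+d-1}{d}$ because $n \ge 2$. Hence $1/d!$ can be computed in time polynomial in $N$, and its bit size is $O(\poly(N))$, which meets the definition of efficient equivalence. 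None of these steps is a real obstacle; the argument is essentially bookkeeping.
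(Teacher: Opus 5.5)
Your proof is correct and follows essentially the same approach as the paper, which notes only that the monomial basis is orthogonal for both inner products, so the two quadratic forms are diagonal with weights $1$ and $\alpha!/d!$ and the estimate follows by comparing weights. Your additional details (the multinomial-coefficient argument for $\alpha! \le d!$, and the rational constant $1/d!$ with bit size polynomial in $N$) fill in steps the paper leaves implicit.
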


\subsubsection{$L^2$- vs.\ apolar inner product}

The $L^2$-inner product and the apolar inner product are both invariant under the action of the orthogonal group $O(n)$; see for example the exposition \cite{CoifmanWeiss1968} by Coifman and Weiss. On the other hand, the standard inner product is not $O(n)$-invariant.

To understand the exact relationship between the $L^2$- and the apolar inner product, it is therefore natural to decompose $\R[x]_d$ into $O(n)$-irreducible subspaces. It is well known (cf.\ \cite[Theorem (2.12)]{CoifmanWeiss1968}) that this decomposition is given by
\begin{equation}
\label{eq:harmonic-decomposition}
\R[x]_d = \Harm_d \; \oplus \; \omega \Harm_{d-2} \; \oplus \; \omega^2 \Harm_{d-4} \; \oplus \; \cdots,
\end{equation}
where $\textrm{Harm}_k$ is the space of \textit{harmonic polynomials} which are homogeneous and of degree $k$, and where 
\begin{equation}
\label{eq:omega-def}
\omega = x_1^2 + \cdots + x_n^2.
\end{equation}
Recall that a polynomial $p$ is called \textit{harmonic}, if it vanishes under the Laplace operator:
\begin{equation*}
  \Delta p = \frac{\partial^2 p}{\partial x_1^2} + \cdots + \frac{\partial^2 p}{\partial x_n^2} = 0.
\end{equation*}
The dimension of $\Harm_k$ equals
\[
h_k = N(n,k) - N(n,k-2) = \binom{n+k-1}{n-1} - \binom{n+k-3}{n-1}.
\]
Because $\Harm_k$ is $O(n)$-irreducible, it carries an $O(n)$-invariant inner product, which is unique up to scaling. These scaling factors, one for every degree $k$, make the difference between the $L^2$- and the apolar inner product. The scaling factor has been worked out by Coifman and Weiss \cite[Theorem (3.15)]{CoifmanWeiss1968}\footnote{In fact, in \cite{CoifmanWeiss1968} this is stated only for $n \geq 3$ and for the action of the special orthogonal group. However, the same argument works also for $n = 2$ when using the noncommutative orthogonal group $O(2)$.}: Let $p, q \in \Harm_k$, then
\[
(p,q)_{L^2} = A_k (p,q)_a, \text{ with } A_k = a_k^2 / h_k,
\]
and
\[
a_k^2 = \left( 1 + \sum_{j = 1}^{\lfloor k/2 \rfloor}
\frac{\beta_1 \beta_2 \cdots \beta_j}{\alpha_1 \alpha_2 \cdots \alpha_j}
\right)^{-1},
\]
with
\[
\alpha_j = 2j(n+2j-3) \; \text{ and } \; \beta_j = (k-2j+1)(k-2j+2).
\]
Using the direct sum decomposition~\eqref{eq:harmonic-decomposition} which is orthogonal for the $L^2$- and for the apolar inner product, we can compute the equivalence constants. We need to be a bit careful here since the apolar inner product is defined for every degree separately. For this note that for any harmonic polynomial of degree $\ell$ we have
\[
(\omega p, \omega p)_a = (p, \Delta \omega p)_a,
\]
and
\[
\Delta \omega p = (4\ell + 2n) p,
\]
so that
\[
(\omega p, \omega p)_a = (4\ell + 2n) (p,p)_a.
\]
By induction, we can compute $(\omega^{r+1} p, \omega^{r+1} p)_a$ in a similar way, by using the identity
\[\Delta \omega^{r+1} p = (r+1)(4r + 4\ell + 2n) \omega^r p,
\]
see for example \cite[Lemma 3.5.3]{Simon2015a}\footnote{The factor $2$ in (3.5.11) is wrong in \cite{Simon2015a}; it should be $1$.}.

\smallskip

Because the $A_k$'s and the coefficients arising in inner products of the form
$(\omega^{r+1} p, \omega^{r+1} p)_a$ 
can be computed in time $O(\poly(N))$, we can state the following lemma.

\begin{lemma}
\label{lem:l2-vs-apolar}
The $L^2$-norm  $\| \cdot \|_{L^2}$ and the apolar norm $\| \cdot \|_a$ are efficiently equivalent.
\end{lemma}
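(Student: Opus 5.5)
We plan to compute both norms in the same orthogonal decomposition~\eqref{eq:harmonic-decomposition} and compare them block by block. Write $p \in \R[x]_d$ as $p = \sum_{r=0}^{\lfloor d/2\rfloor} \omega^r p_r$ with $p_r \in \Harm_{d-2r}$. This decomposition is orthogonal for both inner products. For the $L^2$-product this holds because $\omega \equiv 1$ on $S^{n-1}$ and harmonics of different degrees are $L^2$-orthogonal. For the apolar product it follows from $O(n)$-invariance, and also directly from the adjointness $(\omega f, g)_a = c\,(f, \Delta g)_a$ between multiplication by $\omega$ and $\Delta$, which is implicit in the identities quoted above. Hence
\[
\|p\|_{L^2}^2 = \sum_r \|p_r\|_{L^2}^2 = \sum_r A_{d-2r} (p_r,p_r)_a,
\qquad
\|p\|_a^2 = \sum_r c_{r,d-2r}\,(p_r,p_r)_a .
\]
Here $c_{r,\ell}$ denotes the factor with $(\omega^r q,\omega^r q)_a = c_{r,\ell}(q,q)_a$ for $q\in\Harm_\ell$, where the apolar norm on the left is taken in degree $\ell+2r$ and on the right in degree $\ell$.

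The constants $c_{r,\ell}$ come from the stated induction. Using $(\omega^{r+1}q,\omega^{r+1}q)_a = (\omega^r q, \Delta\omega^{r+1}q)_a$, with the degree-dependent normalization $1/d!$ of the apolar product kept track of, together with $\Delta\omega^{r+1}q = (r+1)(4r+4\ell+2n)\omega^r q$, we get a product formula. Up to a ratio of factorials it is
\[
c_{r,\ell} = \prod_{j=0}^{r-1}(j+1)(4j+4\ell+2n).
\]
We will fix the normalization carefully, since the factor $1/d!$ changes with the degree. That bookkeeping step is where we expect errors to creep in, so we will check it on a small example such as $n=2$, $\ell=0$, $r=1$.

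Next set $\lambda_r = A_{d-2r}/c_{r,d-2r} > 0$. Then
\[
\Big(\min_r \lambda_r\Big)\|p\|_a^2 \le \|p\|_{L^2}^2 \le \Big(\max_r \lambda_r\Big)\|p\|_a^2 .
\]
Each $A_k = a_k^2/h_k$ is an explicit rational number. The quantity $a_k^{-2}$ is a sum of at most $k/2+1$ terms, each a ratio of products of $O(k)$ integers bounded polynomially in $n+k$. So its bit size is $O(\poly(k\log(n+k)))$, and $c_{r,\ell}$ is handled the same way. Since $d \le N$ and $n \le N$, all $\lambda_r$ together can be computed in time $O(\poly(N))$ as exact rationals.

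The remaining obstacle is that the constants must be rational while we have bounds on squared norms. We handle this by taking rational under- and over-approximations of $\sqrt{\min\lambda_r}$ and $\sqrt{\max\lambda_r}$ with a constant factor of slack, for example via a power of $2$ obtained from the bit lengths. This is enough, because efficient equivalence only requires some rational $k,K$ of polynomial bit size. The lemma then follows. Combined with Lemma~\ref{lem:standard-vs-apolar} and transitivity, which multiplies the constants, it also gives efficient equivalence of the $L^2$ and standard norms.
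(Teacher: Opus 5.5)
Your proposal is correct and follows essentially the same route as the paper: the harmonic decomposition~\eqref{eq:harmonic-decomposition}, orthogonal for both inner products; the Coifman--Weiss factors $A_k$ on each $\Harm_k$; and the recursion via $\Delta\omega^{r+1}q = (r+1)(4r+4\ell+2n)\omega^r q$ to compute $\|\omega^r p_r\|_a$. The constants are then the extreme blockwise ratios. Your explicit tracking of the degree-dependent normalization (the extra factor $\ell!/(\ell+2r)!$) and of the rational rounding of square roots fills in details the paper leaves implicit.
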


\subsubsection{$L^2$- vs.\ standard inner product}

From Lemma~\ref{lem:standard-vs-apolar}, Lemma~\ref{lem:l2-vs-apolar}, and transitivity of efficient equivalence, we immediately derive the following lemma.

\begin{lemma}
\label{lem:l2-vs-standard}
The $L^2$-norm $\| \cdot \|_{L^2}$ and the standard norm $\| \cdot \|$ are efficiently equivalent.
\end{lemma}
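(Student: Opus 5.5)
The plan is to compose the two efficient equivalences already established and check that composition preserves efficiency. By Lemma~\ref{lem:standard-vs-apolar} we have explicit rational constants $k_1 = \bigl(\min\{\alpha! : |\alpha| = d\}/d!\bigr)^{1/2}$ and $K_1 = 1$ with
\[
k_1 \|p\| \leq \|p\|_a \leq K_1 \|p\| .
\]
Since $k_1$ involves a square root, I would replace it by a rational lower bound, for instance the rational number $\min\{\alpha!\}/d!$ itself, which is at most $1$ and hence at most its square root. By Lemma~\ref{lem:l2-vs-apolar} there are rational constants $k_2, K_2$, computable in time $O(\poly(N))$, with
\[
k_2 \|p\|_a \leq \|p\|_{L^2} \leq K_2 \|p\|_a .
\]

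Chaining the two gives
\[
k_2 k_1 \|p\| \leq \|p\|_{L^2} \leq K_2 K_1 \|p\|
\]
for every $p \in \R[x]_d$. So the required constants are $k = k_1 k_2$ and $K = K_1 K_2$.

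For the complexity claim, $k_1$ is a ratio of factorials of numbers at most $d \leq N$. Its bit size is therefore $O(d \log d)$, and it can be computed in polynomial time by enumerating the partitions, or more simply by using the balanced $\alpha$, since the minimum of $\alpha!$ is attained at the most balanced multi-index. The constants $k_2, K_2$ have bit size $O(\poly(N))$ by Lemma~\ref{lem:l2-vs-apolar}. Multiplying two rationals adds their bit sizes, so $k$ and $K$ are computed in time $O(\poly(N))$.

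The only mildly delicate point is that square roots of rationals must be replaced by rational bounds in the right direction, so that the constants stay rational without losing polynomial bit size. Beyond that the argument is just transitivity, and I expect no real obstacle.
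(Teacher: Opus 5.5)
Your proposal is correct and follows the paper's route: the paper derives this lemma directly from Lemma~\ref{lem:standard-vs-apolar}, Lemma~\ref{lem:l2-vs-apolar}, and transitivity of efficient equivalence. You also replace the irrational constant $\sqrt{\min\{\alpha!\}/d!}$ by the rational lower bound $\min\{\alpha!\}/d! \leq 1$, which fills in a detail the paper leaves implicit.
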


\subsection{The moment matrix}

We consider the \textit{moment matrix} $M$ given by entries
\begin{equation}
\label{eq:def-moment-matrix}
M_{\alpha, \beta} = \int_{S^{n-1}} x^{\alpha + \beta} \, \D \sigma(x)
\end{equation}
for multi-indices $\alpha, \beta$ with $|\alpha| = |\beta| = d$.

We need to determine the entries $M_{\alpha, \beta}$ explicitly. Folland~\cite{Folland2001} computed the moments for the Lebesgue measure $\mu$, giving
\[
\int_{S^{n-1}} x_1^{\alpha_1} \cdots x_n^{\alpha_n} \, \D \mu(x) =
    \begin{cases}
    0, &\text{if some} \ \alpha_j \ \text{is odd},\\
    \dfrac{2 \, \Gamma(\beta_1) \Gamma(\beta_2) \cdots \Gamma(\beta_n)}{\Gamma(\beta_1 + \beta_2 + \cdots + \beta_n)}, & \text{if all} \ \alpha_j \ \text{are even,}
    \end{cases}
\]
with $\beta_j = \frac{1}{2}(\alpha_j +1)$. Normalizing yields
\[
\int_{S^{n-1}} x_1^{\alpha_1} \cdots x_n^{\alpha_n} \, \D \sigma(x) =
\begin{cases}
0, & \text{if some} \ \alpha_j \ \text{is odd},\\
\dfrac{\prod\limits_{j=1}^{n} \prod\limits_{k=0}^{\alpha_j/2-1}\left(\alpha_j - 1 - 2k\right)}{ \prod\limits_{k=1}^{\left|\alpha\right|/2}\left(n+\left|\alpha\right| -2k\right)}, & \text{if all} \ \alpha_j \ \text{are even.}
\end{cases}
\]
The last formula shows that one can compute the (rational) entries of $M$ in time $O(\poly(N))$.

The moment matrix $M$ is positive definite. In fact, one can use the considerations in Section~\ref{ssec:norms} to efficiently determine a lower bound for the smallest eigenvalue of $M$ and an upper bound for the largest eigenvalue of $M$ using the Rayleigh-Ritz-principle. For any polynomial $p \in \R[x]_d$, given by $\sum_{|\alpha| = d} p_\alpha x^{\alpha}$, we have 
\begin{equation}
\label{eq:moment-matrix-L2-norm}
\begin{split}
(p_\alpha)^{\sf T}  M (p_\alpha)
& \; = \; \int_{S^{n-1}} \sum_{|\alpha|, |\beta| = d} p_\alpha p_\beta x^{\alpha + \beta} \, \D \sigma(x)\\
& \; = \; \int_{S^{n-1}} \left(\sum_{|\alpha| = d} p_\alpha x^{\alpha}\right)^2 \, \D \sigma(x) \\
& \; = \; \left\|p\right\|_{L^2}^2.
\end{split}
\end{equation}
Therefore,
\begin{equation*}
\frac{(p_\alpha)^{\sf T} M (p_\alpha)}{(p_\alpha)^{\sf T} (p_\alpha)}
= \frac{\left\|p\right\|_{L^2}^2}{\left\|p\right\|^2},
\end{equation*}
and by the Rayleigh-Ritz principle, minimizing (resp.\ maximizing) this ratio over $\R[x]_d \setminus \{0\}$ provides the smallest (resp.\ largest) eigenvalue of $M$. Now, Lemma~\ref{lem:l2-vs-standard} can be used to derive the promised bounds.

\subsection{Cones of polynomials and Loewner-John ellipsoids}

Inside $\R[x]_{2d}$ we consider three convex cones. The \emph{cone of sum of squares} is defined by
\[
\sos{n,2d} = \{ p \in \R[x]_{2d} : p = q_1^2 + \cdots + q_m^2 \text{ for some } q_1, \ldots, q_m \in \R[x]_d \}.\footnote{Throughout this paper, we implicitly identify a cone of polynomials with the corresponding cone of coefficient vectors with respect to the monomial basis.}
\]
The \emph{cone of nonnegative forms} is
\[
\mathcal{P}_{n,2d} = \{ p \in \R[x]_{2d} : p(x) \geq 0 \text{ for all } x \in \R^n\}.
\]
With respect to the apolar inner product, the dual cone of $\mathcal{P}_{n,2d}$ is given by
\[
\mathcal{P}^*_{n,2d} = \{ p \in \R[x]_{2d} : (p,q)_a \geq 0 \text{ for all } q \in \mathcal{P}_{n,2d}\}.
\]
This follows from the fact that one can evaluate a polynomial $p$ at a point $v = (v_1, \ldots, v_n) \in \R^n$ via the apolar inner product 
\[
((v \cdot x)^{2d}, p)_a =
\frac{1}{(2d)!} \sum_{|\alpha| = 2d} \alpha! \frac{(2d)!}{\alpha!} v^{\alpha} p_{\alpha} = p(v),
\]
where we applied the multinomial theorem:
\[
(v \cdot x)^{2d} = \Big(\sum_{i=1}^n v_i x_i\Big)^{2d} = 
\sum_{|\alpha| = 2d} \frac{(2d)!}{\alpha!} v^{\alpha} x^{\alpha}.
\]
Consequently, the dual cone is the \emph{cone of $2d$-th powers of linear forms}
\[
\mathcal{P}^*_{n,2d} = \{ p \in \R[x]_{2d} : 
p = q_1^{2d} + \cdots + q_m^{2d} \text{ for some } q_1, \ldots, q_m \in \R[x]_1\}
\]

The three cones satisfy the inclusions
\begin{equation}\label{inclusions-sos-not-normalized}
  \mathcal{P}^*_{n,2d} \; \subseteq \;
  \sos{n,2d} \; \subseteq \;
  \mathcal{P}_{n,2d}.
\end{equation}

In \cite{Blekherman2004}, Blekherman studies the geometry of these three cones. In particular, he computes Loewner-John ellipsoids of appropriate compact sections. 

We intersect $\R[x]_{2d}$ with the affine hyperplane
\[
L_{n,2d} = \Big\{p \in \R[x]_{2d} : \int_{S^{n-1}} p(x)\, \D \sigma(x) = 1\Big\},
\]
which yields the three convex bodies
\[
\overline{\mathcal{P}^*_{n,2d}} \; = \; \mathcal{P}^*_{n,2d} \cap L_{n,2d}, \;\;
\overline{\sos{n,2d}} \; = \; \sos{n,2d} \cap L_{n,2d}, \;\;
\overline{\mathcal{P}_{n,2d}} \; = \; \mathcal{P}_{n,2d} \cap L_{n,2d}.
\]

In general, it is well-known that every convex body $C$ in a finite-dimensional Euclidean space $E$ is contained in a unique ellipsoid of minimal volume $\Ein(C)$ and contains a unique ellipsoid of maximal volume $\Eout(C)$, see \cite[Section 2.2]{Blekherman2004}. These ellipsoids are called Loewner-John ellipsoids. If $C$ is centrally symmetric, that is $C = -C$, then 
\[
\Ein(C) \subseteq C \subseteq \sqrt{\dim E} \; \Ein(C),
\]
and similarly,
\[
\frac{1}{\sqrt{\dim E}} \; \Eout(C) \subseteq C \subseteq \Eout(C).
\]

The \emph{coefficient of symmetry} of a compact convex body $C$ with respect to a point $v$ in the interior of $C$ is defined as
\begin{equation*}
  \max \set{\alpha \in \R}{-\alpha(C-v) \, \subseteq \, C-v}.
\end{equation*}

In \cite{Blekherman2004} Blekherman determines the coefficient of symmetry of $\overline{\mathcal{P}_{n,2d}^*}$ as well as its Loewner-John ellipsoid $\Eout(\overline{\mathcal{P}_{n,2d}^*})$. This implies, \cite[Theorem 7.6]{Blekherman2004}:

\begin{proposition}
\label{prop:blekherman-inner-ball}
$\overline{\mathcal{P}_{n,2d}^*}$ contains the ball of radius
\begin{equation}
\label{eq:blekherman-inner-radius}
\frac{d! \, \Gamma\left(\frac{2d+n}{2}\right)}{\Gamma\left( \frac{4d+n}{2}\right) \sqrt{N(n,d) - 1}}
\end{equation}
with respect to the $L^2$-norm $\|\cdot\|_{L^2}$, centered at $\omega^d$, as defined in~\eqref{eq:omega-def}.
\end{proposition}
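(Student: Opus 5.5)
We derive the statement from Blekherman's description of the maximal-volume inscribed ellipsoid and the coefficient of symmetry of $\overline{\mathcal{P}_{n,2d}^*}$ with respect to $\omega^d$. We then convert the ellipsoid into a ball using the harmonic decomposition from Section~\ref{ssec:norms}.

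First, we use $O(n)$-invariance. The body $\overline{\mathcal{P}_{n,2d}^*}$ is invariant under the orthogonal group, and its centre of symmetry is $\omega^d$: the $O(n)$-average of $(v\cdot x)^{2d}$ over $v\in S^{n-1}$ is a multiple of $\omega^d$, normalized by the hyperplane $L_{n,2d}$. Any $O(n)$-invariant ellipsoid centred at $\omega^d$ in $L_{n,2d}-\omega^d$ has its axes aligned with the decomposition $\omega^{d-k}\Harm_{2k}$, $k=1,\dots,d$, from \eqref{eq:harmonic-decomposition}. It is a ball of some radius $\rho_k$ in each isotypic block. By uniqueness, $\Eout$ is such an invariant ellipsoid. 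So to find the inscribed ball we need the smallest semi-axis of the ellipsoid actually used.

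Second, rather than computing $\Eout$ exactly, we exhibit an inscribed ball directly by duality. The body is the convex hull of the points $\ell_v=(v\cdot x)^{2d}$ with $v\in S^{n-1}$ (these satisfy $\int \ell_v\,\D\sigma=c_d$, so we rescale). A ball $\omega^d+B_{L^2}(r)$ lies in this convex hull if and only if, for every $f$ with $\int f\,\D\sigma=0$ and $\|f\|_{L^2}=1$, we have
\[
\max_v (f,\ell_v)_{L^2} - (f,\omega^d)_{L^2} \ge r .
\]
The term $(f,\omega^d)_{L^2}$ vanishes. By the reproducing property, $(f,\ell_v)_{L^2}$ equals a constant times $f(v)$ after projection onto each harmonic block, via the Funk–Hecke formula. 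This reduces the claim to the inequality
\[
\max_{S^{n-1}} f \ge c\,\|f\|_{L^2}
\]
for mean-zero forms, applied to the image of $f$ under the Funk–Hecke multiplier operator. Such a bound follows from
\[
\|f\|_{L^\infty}\le \sqrt{\dim}\,\|f\|_{L^2}
\]
(Cauchy–Schwarz with the reproducing kernel on the relevant space of dimension $N(n,d)-1$, after restricting to squares of $d$-forms). We combine this with the elementary fact that a mean-zero function satisfies $\max f\ge \|f\|_{L^1}/2$.

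Third, we track the constants. The Funk–Hecke eigenvalues of $(v\cdot x)^{2d}$ on $\Harm_{2k}$ are ratios of Gamma functions. The smallest is attained at $k=d$, and it produces the factor $d!\,\Gamma(\frac{2d+n}{2})/\Gamma(\frac{4d+n}{2})$. The factor $1/\sqrt{N(n,d)-1}$ comes from the kernel estimate. This is exactly Blekherman's Theorem~7.6, so in the paper we simply cite it.

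The main obstacle is the sharp comparison between the maximum and the $L^2$ norm, which is where the $\sqrt{N(n,d)-1}$ enters. We would follow Blekherman: write the dual body as $\{f : \max f\le 1\}$ and use that $\sqrt{f}$-type arguments involve only $d$-forms.
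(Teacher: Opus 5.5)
The paper gives no argument of its own here. It quotes Blekherman's Theorem~7.6, as you also do at the end, so the issue is the justification you sketch. Your skeleton is right. The support function of $\overline{\mathcal{P}^*_{n,2d}}$ in a mean-zero direction $f$ is $\max_{S^{n-1}} Tf$, where $T$ is the Funk--Hecke multiplier. Its smallest eigenvalue on mean-zero forms is $\lambda_{\min}(T)=d!\,\Gamma(\frac{2d+n}{2})/\Gamma(\frac{4d+n}{2})$, attained on $\Harm_{2d}$, and the stated radius is exactly $\lambda_{\min}(T)/\sqrt{N(n,d)-1}$.

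The step meant to produce the factor $1/\sqrt{N(n,d)-1}$ fails. What you need is
\[
\max_{S^{n-1}} g \;\geq\; \frac{\|g\|_{L^2}}{\sqrt{N(n,d)-1}} \quad \text{for all } g \in \R[x]_{2d} \text{ with } \int_{S^{n-1}} g \, \D\sigma = 0,
\]
and $g=Tf$ is an arbitrary mean-zero $2d$-form. The support-function criterion quantifies over all directions, so nothing lets you ``restrict to squares of $d$-forms''. On mean-zero $2d$-forms the kernel bound is $\|g\|_{L^\infty}\le\sqrt{N(n,2d)-1}\,\|g\|_{L^2}$, and this is sharp: take $g$ to be the kernel at a point. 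Combining it with $\max g\ge\|g\|_{L^1}/2$ and $\|g\|_{L^2}^2\le\|g\|_{L^1}\|g\|_{L^\infty}$ only gives $\max g\ge\|g\|_{L^2}/(2\sqrt{N(n,2d)-1})$. That has the wrong dimension and an extra factor $2$. It would still suffice for the polynomial-time application, but it does not prove the proposition as stated.

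The missing idea is that the displayed inequality is Proposition~\ref{prop:blekherman-outer-ball} in disguise. Set $p=\omega^d-g/\max g$, which is nonnegative with $\int p\,\D\sigma=1$. Then $\|g\|_{L^2}^2/(\max g)^2=\|p\|_{L^2}^2-1\le\max_{S^{n-1}}p-1$. So what you need is the Christoffel-type bound $\max_{S^{n-1}}p\le N(n,d)\int p\,\D\sigma$ for every nonnegative $2d$-form $p$, not only for sums of squares. Your ``$\sqrt{f}$'' heuristic cannot give this, since nonnegative forms need not be squares.

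For sums of squares the bound is Cauchy--Schwarz with the reproducing kernel of $\R[x]_d$. In general, average $p$ over the stabilizer of a maximizer $v$. This leaves an even univariate $h(t)\ge 0$ on $[-1,1]$ in $t=v\cdot x$. By Luk\'acs' theorem, $h=\sum_i a_i^2+(1-t^2)\sum_j b_j^2$ with each $a_i$ of degree at most $d$ and of the parity of $d$. Each $a_i(v\cdot x)$ is then a $d$-form on the sphere, so $p(v)=\sum_i a_i(1)^2\le N(n,d)\sum_i\int a_i^2\le N(n,d)\int p$. With this in place your argument yields exactly the claimed radius.

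Two minor points. $\omega^d$ is the $O(n)$-fixed point, not a centre of symmetry, since the body is not centrally symmetric. Your Step~1 on invariant ellipsoids is never used.
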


Blekherman \cite{Blekherman2004} 
computed the coefficient of symmetry of $\overline{\mathcal{P}_{n,2d}}$ as well as its Loewner-John ellipsoid $\Ein(\overline{\mathcal{P}_{n,2d}})$, implying \cite[Corollary 6.7]{Blekherman2004}:

\begin{proposition}
\label{prop:blekherman-outer-ball}
$\overline{\mathcal{P}_{n,2d}}$ is contained in the ball of radius
\begin{equation*}
\sqrt{N(n,d) - 1} = \sqrt{\binom{n+d-1}{d} - 1}
\end{equation*}
with respect to the $L^2$-norm $\| \cdot \|_{L^2}$, centered at the polynomial $\omega^{d}$.
\end{proposition}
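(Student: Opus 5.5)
The plan is to bound $\|p-\omega^d\|_{L^2}$ for an arbitrary $p \in \overline{\mathcal{P}_{n,2d}}$, in two steps: first reduce it to a bound on $\max_{S^{n-1}} p$, then bound that maximum by $N(n,d)$. The second step is only partly settled below.

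\emph{Step 1: reduction to the maximum.} The section $L_{n,2d}$ fixes $\int_{S^{n-1}} p\,\D\sigma = 1$. Since $\omega = 1$ on $S^{n-1}$, this says $(p,\omega^d)_{L^2} = 1$, and also $\|\omega^d\|_{L^2}=1$. Hence, by Pythagoras,
\[
\|p-\omega^d\|_{L^2}^2 = \|p\|_{L^2}^2 - 2(p,\omega^d)_{L^2} + \|\omega^d\|_{L^2}^2 = \|p\|_{L^2}^2 - 1 .
\]
So it suffices to show $\|p\|_{L^2}^2 \le N(n,d)$. Because $p \ge 0$ on the sphere and has mean $1$, Hölder gives
\[
\|p\|_{L^2}^2 = \int p\cdot p\,\D\sigma \le \max_{S^{n-1}} p \int p\,\D\sigma = \max_{S^{n-1}} p .
\]
The statement is therefore reduced to $\max_{S^{n-1}} p \le N(n,d)$ for every nonnegative $2d$-form with mean $1$.

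\emph{Step 2, sums of squares: done.} Let $K_d(x,y)$ be the reproducing kernel of $\R[x]_d$ restricted to $S^{n-1}$ with the $L^2$-inner product. By $O(n)$-invariance, $K_d(x,x)$ is constant, and integrating over the sphere gives $K_d(x,x) = N(n,d)$. Cauchy--Schwarz then yields $q(x)^2 \le N(n,d)\,\|q\|_{L^2}^2$ for every $q\in\R[x]_d$ and $x \in S^{n-1}$. Summing over $p = \sum_i q_i^2$ gives $p(x) \le N(n,d)\int p\,\D\sigma = N(n,d)$. This already proves the claim on $\overline{\sos{n,2d}}$, which is all that is used later in the paper.

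\emph{Step 2, general nonnegative forms: the main obstacle.} A nonnegative form need not have a polynomial square root, so the argument above does not apply directly. What I would try first is to find, for each $x_0 \in S^{n-1}$, an element $\ell_{x_0}$ of the dual cone $\mathcal{P}^*_{n,2d}$ (viewed through the $L^2$-pairing) satisfying
\[
(\ell_{x_0},p)_{L^2} = N(n,d)\int p\,\D\sigma - p(x_0) \quad \text{for all } p\in\R[x]_{2d}.
\]
Nonnegativity of $p$ would then immediately give $p(x_0)\le N(n,d)$. Since point evaluation corresponds to the zonal kernel of degree $2d$, I would build $\ell_{x_0}$ as a zonal polynomial in $x_0\cdot y$. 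Its harmonic coefficients can be read off from the Coifman--Weiss constants $A_k$, and I would then show it is a nonnegative combination of powers $(v\cdot x)^{2d}$ by a Gegenbauer-positivity argument, in the spirit of Blekherman's computation of $\Eout(\overline{\mathcal{P}^*_{n,2d}})$.

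If this direct construction fails, the fallback is the duality argument behind Blekherman's \cite[Corollary 6.7]{Blekherman2004}. Namely, $\overline{\mathcal{P}_{n,2d}}$ is the polar, inside $L_{n,2d}$ and centered at $\omega^d$, of the section of $\mathcal{P}^*_{n,2d}$. Together with the coefficient of symmetry of $\overline{\mathcal{P}^*_{n,2d}}$ and the explicit $O(n)$-equivariant Loewner--John ellipsoid, this yields an outer ball. The crux is to verify that its radius is exactly $\sqrt{N(n,d)-1}$.
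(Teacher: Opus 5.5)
There is a genuine gap. The proposition is about all nonnegative forms, and your argument only proves it for sums of squares.

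Your Step~1 is correct. For $p\in\overline{\mathcal P_{n,2d}}$ you have $\|p-\omega^d\|_{L^2}^2=\|p\|_{L^2}^2-1\le \max_{S^{n-1}}p-1$. So the proposition reduces to the bound $\max_{S^{n-1}}p\le N(n,d)$ on $\overline{\mathcal P_{n,2d}}$. Equivalently, the coefficient of symmetry of $\overline{\mathcal P_{n,2d}}$ about $\omega^d$ must be at least $1/(N(n,d)-1)$.

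Your reproducing-kernel argument proves this bound on $\overline{\sos{n,2d}}$. For general nonnegative forms you only sketch two strategies and carry out neither. That is exactly the step separating the statement from its sums-of-squares version. The paper does not prove the statement itself; it quotes \cite[Corollary~6.7]{Blekherman2004}, which covers all of $\overline{\mathcal P_{n,2d}}$.

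Your first strategy also mixes pairings. The forms $(v\cdot x)^{2d}$ represent point evaluations for the apolar product. Nonnegative combinations of them are therefore nonnegative on $\mathcal P_{n,2d}$ in the apolar pairing, not in the $L^2$-pairing you use to define $\ell_{x_0}$. In the $L^2$-pairing you would need a positive measure $\mu$ with $\int p\,\D\sigma=\frac{1}{N(n,d)}\bigl(p(x_0)+\int p\,\D\mu\bigr)$ for all $p\in\R[x]_{2d}$. That is a positive cubature rule with mass $1/N(n,d)$ at $x_0$, and its existence is the whole difficulty.

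The missing idea is a symmetrization that reduces the general case to the one you already have.
\begin{enumerate}
\item Let $x_0$ maximize $p$ on $S^{n-1}$. Average $p$ over the stabilizer $H\cong O(n-1)$ of $x_0$ in $O(n)$. The average $\bar p=\int_H p\circ h\,\D h$ is again a nonnegative $2d$-form, with $\int\bar p\,\D\sigma=1$ and $\bar p(x_0)=p(x_0)$.
\item Write $x=tx_0+y$ with $y\perp x_0$. Then $\bar p$ is $O(n-1)$-invariant in $y$, hence a polynomial in $t$ and $|y|^2=1-t^2$ on the sphere. So $\bar p(x)=\phi(x_0\cdot x)$ on $S^{n-1}$, where $\phi$ is an even polynomial of degree at most $2d$, nonnegative on $[-1,1]$.
\item Write $\phi(t)=\psi(t^2)$. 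By the Markov--Luk\'acs theorem on $[0,1]$, $\psi=A^2+s(1-s)B^2$ or $\psi=sA^2+(1-s)B^2$. Substituting $s=t^2$ writes $\phi$ as a sum of terms $a(t)^2$ and $(1-t^2)b(t)^2$. Here $a,b$ have definite parity, $\deg a\le d$ and $\deg b\le d-1$.
\item Homogenize via $t\mapsto x_0\cdot x$ and $1-t^2\mapsto \omega-(x_0\cdot x)^2=\sum_i (u_i\cdot x)^2$, where the $u_i$ form an orthonormal basis of $x_0^{\perp}$. Pad lower degrees with powers of $\omega=\sum_i x_i^2$. This exhibits $\bar p$ as a sum of squares of $d$-forms, since two $2d$-forms that agree on the sphere are equal.
\end{enumerate}
Your sums-of-squares bound then gives $p(x_0)=\bar p(x_0)\le N(n,d)$ for every $p\in\overline{\mathcal P_{n,2d}}$, and Step~1 finishes the proof.
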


Both bounds for the above radii depend on $n$ as well as $d$.
Their bit sizes are bounded by $O(\poly(N))$. Since 
\[
\overline{\mathcal{P}^*_{n,2d}} \; \subseteq \; \overline{\sos{n,2d}} \; \subseteq \; \overline{\mathcal{P}_{n,2d}},
\]
these results provide suitable balls for our purpose.

Blekherman further noted in \cite[Proof of Theorem 4.1]{Blekherman2006} that one can find a ball containing $\overline{\sos{n,2d}}$ of radius $2 \cdot 4^d$, thus depending only on $d$, by applying the reverse H\"older inequality due to Duoandikoetxea \cite{Duoandikoetxea1987}. We recall Duoandikoetxea's result here, as we shall use it later.

\begin{proposition}
\label{prop:reverse-holder-ineq}
Let $P$ be any polynomial of degree $k$. Then, if $2<p<\infty$,
\begin{equation*}
\left\|P\right\|_{L^p}
\leq p^{k/2}\, \left\|P\right\|_{L^2}
\end{equation*}
and if $0<p<2$,
\begin{equation*}
\left\|P\right\|_{L^2}
\leq 4^{k(2/p-1)}\, \left\|P\right\|_{L^p}.
\end{equation*}
\end{proposition}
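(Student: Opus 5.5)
The first inequality is a hypercontractivity estimate, and the second follows from it by a log-convexity (Hölder interpolation) argument. So I would prove the case $2<p<\infty$ first, with the slightly better constant $(p-1)^{k/2}$, and then deduce the case $0<p<2$.

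\emph{Case $2<p<\infty$.} By the decomposition~\eqref{eq:harmonic-decomposition}, and since $\omega\equiv 1$ on $S^{n-1}$, the restriction of a polynomial $P$ of degree $k$ to the sphere can be written as $P=\sum_{j=0}^{k} Y_j$ with $Y_j\in\Harm_j$. These pieces are mutually orthogonal in $L^2(\sigma)$. For $0<r\le 1$, let $T_r$ be the Poisson-type multiplier operator acting as $r^j$ on $\Harm_j$. I would invoke the hypercontractivity of this semigroup on the sphere (Beckner; equivalently Bonami--Gross via the Gaussian measure): $\|T_r f\|_{L^p}\le \|f\|_{L^2}$ whenever $r\le (p-1)^{-1/2}$. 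Set $r=(p-1)^{-1/2}$ and $Q=\sum_{j\le k} r^{-j}Y_j$, so that $P=T_rQ$. Orthogonality gives $\|Q\|_{L^2}^2=\sum_j r^{-2j}\|Y_j\|_{L^2}^2\le r^{-2k}\|P\|_{L^2}^2$. Hence
\[
\|P\|_{L^p}\le \|Q\|_{L^2}\le (p-1)^{k/2}\|P\|_{L^2}\le p^{k/2}\|P\|_{L^2}.
\]
The main obstacle is justifying this hypercontractive bound with the dimension-free constant $(p-1)^{-1/2}$. I would first try to cite Beckner's sharp result on $S^{n-1}$ directly. If a self-contained argument is needed, I would instead transfer from Gaussian space: there the Ornstein--Uhlenbeck semigroup is hypercontractive, $x\mapsto x/|x|$ pushes the Gaussian measure forward to $\sigma$, and degree is controlled under this transfer. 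I expect that bookkeeping to be the delicate point.

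\emph{Case $0<p<2$.} Apply Hölder's inequality (log-convexity of $L^q$ norms) with exponents $p<2<4$:
\[
\|P\|_{L^2}\le \|P\|_{L^p}^{\theta}\,\|P\|_{L^4}^{1-\theta},\qquad \tfrac12=\tfrac{\theta}{p}+\tfrac{1-\theta}{4},
\]
which gives $\theta=p/(4-p)$. The first case with exponent $4$ yields $\|P\|_{L^4}\le 4^{k/2}\|P\|_{L^2}$. Substituting this and dividing by $\|P\|_{L^2}^{1-\theta}$ (the case $P=0$ being trivial) gives
\[
\|P\|_{L^2}\le 4^{\frac{k}{2}\cdot\frac{1-\theta}{\theta}}\,\|P\|_{L^p}.
\]
Since $(1-\theta)/\theta=(4-2p)/p=2(2/p-1)$, the exponent is exactly $k(2/p-1)$, which is the claimed bound.
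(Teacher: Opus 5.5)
Your argument is correct, but the paper has no proof to compare it with. The proposition is quoted from Duoandikoetxea and used as a black box. In this paper it is applied only to the homogeneous form $\sum_{|\alpha|=d}x^{2\alpha}$, with exponent $1$ and $k=2d$.

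You replace that black box with a different one: the sharp hypercontractivity of the Poisson semigroup on $S^{n-1}$ (Beckner; for $n=2$, Weissler's theorem on the circle). Given that input, both of your steps are sound.

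\begin{itemize}
\item \emph{Case $2<p<\infty$.} Writing $P=T_rQ$ with $Q=\sum_{j\le k}r^{-j}Y_j$ and using $L^2$-orthogonality of the $Y_j$ gives the sharper constant $(p-1)^{k/2}$.
\item \emph{Case $0<p<2$.} Log-convexity between $L^p$ and $L^4$ is valid for every $p>0$; the H\"older exponents are $(4-p)/2>1$ and $2/(1-\theta)$. It reproduces $4^{k(2/p-1)}$ exactly. Feeding in your own bound $\|P\|_{L^4}\le 3^{k/2}\|P\|_{L^2}$ would even give $3^{k(2/p-1)}$.
\end{itemize}

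So your route gives an explicit derivation and better constants. The price is that it rests on a deeper theorem than the one the paper cites.

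One caution concerns your parenthetical and your fallback. The spherical Poisson estimate is not simply a reformulation of Gaussian hypercontractivity.

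\begin{itemize}
\item \emph{Homogeneous $P$.} Transfer from Gaussian space is clean here. Let $\gamma$ be the standard Gaussian measure, $m_s=\int|x|^s\,\D\gamma(x)$, and let $P$ be homogeneous of degree $k$. Polar coordinates give $\|P\|_{L^p(\sigma)}\le (p-1)^{k/2}\,m_{2k}^{1/2}\,m_{kp}^{-1/p}\,\|P\|_{L^2(\sigma)}$. The ratio is at most $1$ by Lyapunov's inequality.
\item \emph{Non-homogeneous $P$.} Here $P(x/|x|)$ is not a polynomial, so ``degree is controlled'' is not automatic. Splitting into even and odd parts and homogenizing each with powers of $\omega$ costs an extra factor (e.g.\ $\sqrt{2}$ from the triangle inequality). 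That naive bound does not always stay below $p^{k/2}$.
\end{itemize}

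This loss is harmless for the paper's homogeneous application. For the statement as given, though, the direct spherical citation should remain your primary route.
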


\subsection{Polynomial-time solvability of the generalized eigenvalue problem}

The generalized eigenvalue problem for two symmetric matrices $B$ and $C \in \mathcal{S}^n$ amounts to finding the largest $\lambda \in \R$ such that there is a unit vector $x \in S^{n-1}$ so that $Cx = \lambda Bx$ holds. If $B$ is the identity matrix, then $\lambda$ is the largest eigenvalue of $C$, denoted by $\lambda_{\max}(C)$ (respectively, we denote by $\lambda_{\min}(C)$ the smallest eigenvalue of $C$). We consider the more general case when $B$ is a positive definite matrix. More precisely, we show the following theorem.

\begin{theorem}
\label{th:generalized-eigenvalue-sdp}
Let $B \in \mathcal{S}^n_{++}$ be a positive definite, rational matrix and let $C \in \mathcal{S}^n$ be a rational matrix. Let $\epsilon > 0$ be a rational number. Let $\lambda$ be the generalized eigenvalue of $B$ and $C$. Then, one can determine a rational number $\mu$ with $\lambda - \epsilon \leq \mu \leq \lambda$ in polynomial time by solving the semidefinite program
\begin{equation}
\label{eq:generalized-eigenvalue-sdp}
\max\{\langle C, X \rangle : X \in \mathcal{S}^n_+, \; \langle B, X \rangle = 1\}.
\end{equation}
The running time is polynomial in the bit size of the input data $n$, $C$, $B$, $\log 1/\epsilon$.
\end{theorem}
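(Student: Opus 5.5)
The plan is to apply Theorem~\ref{th:sdppolynomialtime} to the semidefinite program \eqref{eq:generalized-eigenvalue-sdp}, after checking two things: its optimal value equals $\lambda$, and its feasible region admits a rational center together with inner and outer ball radii whose bit sizes are polynomial in the input.

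\emph{Value of the program.} Write $B = LL^{\sf T}$ with $L$ invertible and substitute $Y = L^{\sf T} X L$. The constraint $\langle B, X\rangle = 1$ becomes $\Tr Y = 1$, and the objective becomes $\langle L^{-1} C L^{-\sf T}, Y\rangle$. The maximum of this over the spectraplex is $\lambda_{\max}(L^{-1}CL^{-\sf T})$. This number equals the largest $\lambda$ with $Cx = \lambda Bx$, which is the generalized eigenvalue of the statement. The value is attained at a rank-one matrix $X = xx^{\sf T}/(x^{\sf T}Bx)$.

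\emph{Reduction to minimization.} Theorem~\ref{th:sdppolynomialtime} is stated for minimization, so I apply it to $-C$. It returns a feasible rational $X^*$ with $\langle C, X^*\rangle \geq \lambda - \epsilon$. Set $\mu = \langle C, X^*\rangle$. Since $X^*$ is feasible, $\mu \le \lambda$ holds automatically. Computing $\mu$ is a rational computation.

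\emph{Center and radii.} I take $X_0 = I/\Tr B$, which is rational and feasible.
\begin{itemize}
\item \textbf{Inner ball.} Let $Z$ lie in the subspace $\{\langle B, Z\rangle = 0\}$ with $\|Z\|_F \le r$. Then $\lambda_{\min}(X_0+Z) \ge 1/\Tr B - r$. So $r = 1/(2\Tr B)$ works, with bit size polynomial in that of $B$.
\item \textbf{Outer ball.} For feasible $X$ we have $\|X\|_F \le \Tr X \le \langle B,X\rangle/\lambda_{\min}(B) = 1/\lambda_{\min}(B)$. So $R = 1/\lambda_{\min}(B) + \|X_0\|_F$ suffices, provided we have a rational lower bound on $\lambda_{\min}(B)$.
\end{itemize}

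\emph{Main obstacle: a rational lower bound on $\lambda_{\min}(B)$.} Write $b$ for the common denominator of the entries of $B$, so $bB$ is integral. Then
\[
\lambda_{\min}(B) \ge \det B / \lambda_{\max}(B)^{n-1}.
\]
Here $\det B \ge b^{-n}$, because $\det(bB)$ is a positive integer. Also $\lambda_{\max}(B) \le \Tr B$. This gives an explicit rational lower bound whose logarithm is polynomial in the bit size of $B$. Consequently $\log_2(R/r)$ is polynomial, and all remaining data are rational of polynomial size, so Theorem~\ref{th:sdppolynomialtime} yields the claimed running time.

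One technicality remains: the theorem's Frobenius balls live in the subspace $L$ and require $X_0 + B(r)$ to lie inside $\mathcal F$, which the inner-ball estimate above verifies.
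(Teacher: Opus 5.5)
Your proof is correct, but it obtains the center and radii differently from the paper. The paper centers at $X_0=\frac1n B^{-1}$, with $r=1/(n\lambda_{\max}(B))$ and $R=2\sqrt n/\lambda_{\min}(B)$. It gets hold of these eigenvalues by a bootstrap: it first solves two auxiliary instances of \eqref{eq:generalized-eigenvalue-sdp}, with $B$ replaced by $I$ and $C$ by $B$, resp.\ $B^{-1}$. Their feasible set is the spectraplex, whose balls are trivially known. The resulting rational approximations are then fed into the final call of Theorem~\ref{th:sdppolynomialtime}.

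You instead center at $I/\Tr B$ and use a priori bounds: $\lambda_{\max}(B)\le\Tr B$, and $\lambda_{\min}(B)\ge b^{-n}(\Tr B)^{-(n-1)}$ from the integrality of $\det(bB)$. This keeps the argument self-contained and purely algebraic. You need no inversion of $B$ and no auxiliary SDPs. You also avoid converting the one-sided approximations those SDPs return into safe values. They give lower bounds on $\lambda_{\max}(B)$ and $1/\lambda_{\min}(B)$, while the radii need upper bounds; the paper leaves this implicit (one just adds $\epsilon$).

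The price is a worst-case ratio $R/r$ of order $b^n(\Tr B)^n$, instead of the paper's instance-dependent $2n^{3/2}\lambda_{\max}(B)/\lambda_{\min}(B)$. Both have polynomially bounded logarithm, which is all Theorem~\ref{th:sdppolynomialtime} requires.

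You also spell out two steps the paper takes for granted: that the value of \eqref{eq:generalized-eigenvalue-sdp} equals $\lambda$ (via $Y=L^{\sf T}XL$), and that exact feasibility of $X^*$ gives $\mu\le\lambda$ after passing to $-C$.

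One cosmetic fix: $\|X_0\|_F=\sqrt n/\Tr B$ need not be rational, so take, e.g., $R=b^n(\Tr B)^{n-1}+n/\Tr B$.
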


Since we are not aware of a suitable reference for this theorem, we give a proof here. We'll apply the fine print theorem for the polynomial-time solvability of semidefinite optimization, Theorem~\ref{th:sdppolynomialtime}. For this, the following lemma is useful.

\begin{lemma}
\label{lem:generalized-eigenvalue-sdp}
Consider the set of feasible solutions of~\eqref{eq:generalized-eigenvalue-sdp}
\[
\mathcal{F} = \{X : X \in \mathcal{S}^n_+, \; \langle B, X \rangle = 1\}.
\]
Then, for $X_0 = \frac{1}{n} B^{-1}$, the ball $X_0+B(r)$ with $r = \frac{1}{n \lambda_{\max}(B)}$ is contained in $\mathcal{F}$. The ball $X_0+B(R)$ with $R = \frac{2\sqrt{n}}{\lambda_{\min}(B)}$ contains $\mathcal{F}$. Here, balls are defined with respect to the Frobenius norm of a symmetric matrix $\|A\|_F = \sqrt{\Tr(A^2)}$.
\end{lemma}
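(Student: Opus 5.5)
The plan is to verify the three claims directly from elementary eigenvalue estimates: $X_0$ is feasible, the small ball lies inside $\mathcal{F}$, and the large ball contains $\mathcal{F}$. The balls $X_0 + B(r)$ are understood, as in Theorem~\ref{th:sdppolynomialtime}, inside the affine space $X_0 + L$, where $L = \{Y \in \mathcal{S}^n : \langle B, Y\rangle = 0\}$.

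\emph{Feasibility of $X_0$.} Since $B$ is positive definite, $B^{-1}$ is positive definite, so $X_0 = \frac1n B^{-1} \in \mathcal{S}^n_+$. Moreover $\langle B, X_0\rangle = \frac1n \Tr(BB^{-1}) = \frac1n \Tr(I_n) = 1$.

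\emph{Inner ball.} Let $Y \in L$ with $\|Y\|_F \leq r = \frac{1}{n\lambda_{\max}(B)}$. The linear constraint holds automatically, since $\langle B, X_0 + Y\rangle = 1 + 0 = 1$. It remains to show $X_0 + Y \succeq 0$.
\begin{itemize}
\item The eigenvalues of $X_0$ are $\frac{1}{n\mu}$ for the eigenvalues $\mu$ of $B$, so $\lambda_{\min}(X_0) = \frac{1}{n\lambda_{\max}(B)} = r$.
\item The spectral norm is bounded by the Frobenius norm, so $\lambda_{\min}(Y) \geq -\|Y\|_F \geq -r$.
\item By Weyl's inequality, $\lambda_{\min}(X_0+Y) \geq \lambda_{\min}(X_0) + \lambda_{\min}(Y) \geq r - r = 0$.
\end{itemize}
Hence $X_0 + Y \in \mathcal{F}$.

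\emph{Outer ball.} Let $X \in \mathcal{F}$.
\begin{itemize}
\item Write $X = \sum_i \mu_i v_iv_i^{\sf T}$ with $\mu_i \geq 0$ and unit vectors $v_i$. Then
\[
1 = \langle B, X\rangle = \sum_i \mu_i\, v_i^{\sf T} B v_i \geq \lambda_{\min}(B)\Tr X,
\]
so $\Tr X \leq 1/\lambda_{\min}(B)$.
\item For a positive semidefinite matrix the Frobenius norm is at most the trace, because $\sum_i \mu_i^2 \leq (\sum_i \mu_i)^2$. Therefore $\|X\|_F \leq 1/\lambda_{\min}(B)$.
\item The eigenvalues of $X_0$ are all at most $\frac{1}{n\lambda_{\min}(B)}$, so $\|X_0\|_F \leq \sqrt{n}\cdot\frac{1}{n\lambda_{\min}(B)} \leq \frac{1}{\lambda_{\min}(B)}$.
\item By the triangle inequality,
\[
\|X - X_0\|_F \leq \frac{2}{\lambda_{\min}(B)} \leq \frac{2\sqrt{n}}{\lambda_{\min}(B)} = R.
\]
\end{itemize}
Since $X - X_0 \in L$, this shows $\mathcal{F} \subseteq X_0 + B(R)$.

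There is no real obstacle here. The only points needing care are that the inner ball must be taken within $L$, so that the linear constraint is preserved, and the two comparisons spectral norm $\leq$ Frobenius norm $\leq$ trace, the latter valid for positive semidefinite matrices.
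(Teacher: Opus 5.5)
Your proof is correct and follows essentially the same route as the paper. For the inner ball, your Weyl-inequality step is exactly the paper's splitting $X = (X_0 - rI) + (rI + A)$. For the outer ball you use the same trace bound $\Tr X \leq 1/\lambda_{\min}(B)$ plus the triangle inequality; using $\|X\|_F \leq \Tr X$ directly, rather than the paper's $\|X\|_F \leq \sqrt{n}\,\lambda_{\max}(X)$, even gives the slightly sharper radius $2/\lambda_{\min}(B)$.
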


\begin{proof}
For proving the statement about the inner ball, let $X = X_0 + A$ be such that $\langle B, A \rangle = 0$ and $\|A\|_F \leq r$. Then, $X$ is positive semidefinite because if we split $X$ into the sum
\[
X = (X_0 - r I) + (r I + A),
\]
then both summands are positive semidefinite. Indeed, we have for the first summand
\[
\lambda_{\min}(X_0 - rI) = \lambda_{\min}\left(\frac{1}{n}B^{-1}\right) - r = \frac{1}{n \lambda_{\max}(B)} - r = 0,
\]
and for the second summand
\[
\lambda_{\min}(rI + A) = r + \lambda_{\min}(A) \geq 0,
\]
because
\[
r \geq \|A\|_F \geq -\lambda_{\min}(A).
\]

\smallskip

For proving the statement about the outer ball, let $X = X_0 + A \in \mathcal{F}$ be feasible. Then, $X \in \mathcal{S}^n_+$, and so
\[
1 = \langle B, X \rangle \geq \langle \lambda_{\min}(B) I, X \rangle = \lambda_{\min}(B) \Tr(X) \geq \lambda_{\min}(B) \lambda_{\max}(X).
\]
Furthermore,
\[
\|X\|_F \leq \sqrt{n} \lambda_{\max}(X),
\]
and, hence,
\[
\|X\|_F \leq \frac{\sqrt{n}}{\lambda_{\min}(B)}.
\]
Thus,
\[
\|A\|_F = \|X_0 + A - X_0\|_F
\leq \|X\|_F + \|X_0\|_F
\leq \frac{2\sqrt{n}}{\lambda_{\min}(B)} = R. \qedhere
\]
\end{proof}

\begin{proof}[Proof of Theorem~\ref{th:generalized-eigenvalue-sdp}]
We first determine $\lambda_{\max}(B)$ and $\lambda_{\min}(B)$. That is, we solve the semidefinite program~\eqref{eq:generalized-eigenvalue-sdp} with $C \leftarrow B$ and $B \leftarrow I$ (respectively with $C \leftarrow B^{-1}$) to determine a rational approximation of $\lambda_{\max}(B)$ (respectively for $\lambda_{\min}(B)$). The polynomial-time solvability of these two semidefinite programs is guaranteed by Theorem~\ref{th:sdppolynomialtime} and Lemma~\ref{lem:generalized-eigenvalue-sdp}. Note that the encoding length of the rational approximations of 
$\lambda_{\max}(B)$ and $\lambda_{\min}(B)$ is again polynomial in the input data. Consequently, these approximations can be used in the last step, when we apply Theorem~\ref{th:sdppolynomialtime} and Lemma~\ref{lem:generalized-eigenvalue-sdp} to solve~\eqref{eq:generalized-eigenvalue-sdp} in polynomial time.
\end{proof}

\section{Proof of Theorem~\ref{th:weak-membership}}
\label{sec:weak-membership}

As discussed in Section~\ref{sec:preparation}, we may assume that the input polynomial is homogeneous. We also normalize  $p$ so that it  lies in $\overline{\Sigma_{n,2d}}$. This scaling can be computed in polynomial time, because it amounts to computing
\[
\int_{S^{n-1}} p(x) \, \D \sigma(x) = \mathbf{e}^{\sf T} M (p_{\alpha}),
\]
where $\mathbf{e}$ is the all-ones vector, $M$ is the moment matrix~\eqref{eq:def-moment-matrix} and $(p_{\alpha})$ denotes the coefficient vector of $p$.

We want to apply Theorem~\ref{th:poly-wopt-iff-poly-wsep} to $K = \overline{\sos{n,2d}}$. For this we need to exhibit a polynomial $a_0 \in \Sigma_{n,2d}$ and radii $r, R$. We take $a_0 = \omega^d$, which, by using the multinomial theorem, in the monomial basis has the following expansion
\[
\omega^d = (x_1^2 + \cdots + x_n^2)^d = \sum_{|\alpha| = d} \frac{d!}{\alpha!} x^{2\alpha}.
\]
Suitable radii $r$ and $R$ are provided by Blekherman's computation of the Loewner-John ellipsoids. The radii of the corresponding balls in Proposition~\ref{prop:blekherman-inner-ball} and Proposition~\ref{prop:blekherman-outer-ball} are given in the $L^2$-norm, but through Lemma~\ref{lem:l2-vs-standard} one can translate these bounds efficiently to the standard norm.

By Theorem~\ref{th:poly-wopt-iff-poly-wsep} it now suffices to show that one can solve the following weak optimization problem in polynomial time
\begin{equation}
\label{eq:weak-opt-sos}
\max\{(f_{\alpha})^{\sf T} (p_{\alpha}) : (p_{\alpha}) \in \overline{\sos{n,2d}}\},
\end{equation}
where $f = \sum_{|\alpha| = 2d} f_{\alpha} x^\alpha \in \Q[x]_{2d}$.
This is a convex optimization problem with a linear objective. Therefore, maximizers are extreme points of $\overline{\sos{n,2d}}$. Extreme points are precisely the normalized squares, $p = q^2$ for $q \in \R[x]_d$. Thus, \eqref{eq:weak-opt-sos} simplifies to
\[
\max\left\{(f_{\alpha})^{\sf T} ((q^2)_{\alpha}) : q \in \R[x]_d, \; \int_{S^{n-1}} q^2(x) \, \D \sigma(x) = 1\right\},
\]
which is equivalent to the generalized eigenvalue problem
\[
\max\{ (q_\alpha)^{\sf T} C(f) (q_{\alpha}) : q \in \R[x]_d, \; (q_\alpha)^{\sf T} M (q_\alpha) = 1\},
\]
with matrix $C(f)_{\alpha, \beta} = f_{\alpha + \beta}$ and moment matrix $M$ defined in~\eqref{eq:def-moment-matrix}.

Theorem~\ref{th:generalized-eigenvalue-sdp} therefore yields a polynomial-time algorithm for solving this optimization problem, completing the proof of Theorem~\ref{th:weak-membership}. \qed

\section{Proof of Theorem~\ref{th:projection-sos}}
\label{sec:projection-sos}

For the proof we will use the Gram matrix method. Given a symmetric matrix $A \in \mathcal{S}^{N(n,d)}$, indexed by $(\alpha,\beta)$ satisfying $\lvert \alpha \rvert, \lvert\beta\rvert=d$,
consider the polynomial
\begin{equation*}
p_A(x) = [x]_d^{\sf T} \, A \, [x]_d = 
  \sum\limits_{\left|\gamma\right|=2d} 
  \biggl(\;  \sum\limits_{\alpha + \beta = \gamma} A_{\alpha,\beta}\biggr)
  \, x^{\gamma},
\end{equation*}
where $[x]_d$ is the vector of all monomials of degree $d$. A homogeneous polynomial $p$ of degree $2d$ can be written as a sum of squares if and only if there exists a positive semidefinite matrix $A$ so that $p = p_A$. Thus, this test amounts to solving the feasibility problem of a semidefinite program. 

The following lemma relates the $L^2$-norm of $p_A$ to the Frobenius norm of the matrix $A$.

\begin{lemma}
\label{lem:relation-L2-Frobenius-norm}
Given $A \in \mathcal{S}^{N(n,d)}$ and $p_A = [x]_d^{\sf T} \, A \, [x]_d$, then
\begin{equation}\label{ineq:L2-leq-Frobenius}
\left\|p_A\right\|_{L^2} \leq {4^{2d} \, N(n,d) \, \lambda_{\max}(M)} \left\|A\right\|_F,
\end{equation}
where $M$ is the moment matrix~\eqref{eq:def-moment-matrix}. Furthermore, if $A$ is positive semidefinite, then
\begin{equation}
\label{ineq:Frobenius-leq-L2}
\lambda_{\min}(M) \, \left\|A\right\|_F \leq \left\|p_A\right\|_{L^2}.
\end{equation}
\end{lemma}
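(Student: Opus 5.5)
Both inequalities come from writing $A$ in an orthonormal eigenbasis and reducing to squares. Diagonalize $A=\sum_{i}\lambda_i v_iv_i^{\sf T}$ with orthonormal $v_i\in\R^{N(n,d)}$, $N=N(n,d)$. Let $q_i=v_i^{\sf T}[x]_d\in\R[x]_d$. Then
\[
p_A=\sum_i \lambda_i q_i^2 ,
\]
and by \eqref{eq:moment-matrix-L2-norm} we have $\|q_i\|_{L^2}^2=v_i^{\sf T}Mv_i$. Consequently $\lambda_{\min}(M)\le\|q_i\|_{L^2}^2\le\lambda_{\max}(M)$.

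For \eqref{ineq:L2-leq-Frobenius}, apply the triangle inequality:
\[
\|p_A\|_{L^2}\le\sum_i|\lambda_i|\,\|q_i^2\|_{L^2}=\sum_i|\lambda_i|\,\|q_i\|_{L^4}^2 .
\]
By the reverse H\"older inequality, Proposition~\ref{prop:reverse-holder-ineq} with $p=4$ and $k=d$, we get $\|q_i\|_{L^4}\le 4^{d/2}\|q_i\|_{L^2}$. Hence $\|q_i^2\|_{L^2}\le 4^d\lambda_{\max}(M)$. Cauchy--Schwarz then gives $\sum_i|\lambda_i|\le\sqrt N\|A\|_F\le N\|A\|_F$. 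This yields the bound with constant $4^d N\lambda_{\max}(M)$, which is well within the stated $4^{2d}N\lambda_{\max}(M)$. The cruder constant leaves slack, for instance if one instead applies Proposition~\ref{prop:reverse-holder-ineq} to $q_i^2$ of degree $2d$.

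For \eqref{ineq:Frobenius-leq-L2}, assume $A\succeq0$, so all $\lambda_i\ge0$ and $p_A\ge0$ pointwise. The plan is to bound $\|p_A\|_{L^2}$ from below by the $L^1$ norm:
\[
\|p_A\|_{L^2}\ge\|p_A\|_{L^1}=\int_{S^{n-1}}p_A\,\D\sigma=\sum_i\lambda_i\|q_i\|_{L^2}^2 ,
\]
where the first step uses that $\sigma$ is a probability measure. Each $\|q_i\|_{L^2}^2\ge\lambda_{\min}(M)$, so the right-hand side is at least $\lambda_{\min}(M)\sum_i\lambda_i=\lambda_{\min}(M)\Tr A$. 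Finally, $\Tr A\ge\|A\|_F$ holds for positive semidefinite $A$, since $\sum\lambda_i\ge(\sum\lambda_i^2)^{1/2}$ when all $\lambda_i\ge0$. This gives \eqref{ineq:Frobenius-leq-L2}.

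The main obstacle is the upper bound, specifically controlling $\|q^2\|_{L^2}$ by $\|q\|_{L^2}^2$; this is exactly where Duoandikoetxea's reverse H\"older inequality is needed. The lower bound is elementary once positivity lets us pass to the $L^1$ norm and use trace identities.
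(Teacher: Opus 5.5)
Your proof is correct. For \eqref{ineq:Frobenius-leq-L2} it is the paper's argument written in the eigenbasis: your identity $\int_{S^{n-1}} p_A\,\D\sigma=\sum_i\lambda_i v_i^{\sf T}Mv_i$ is just $\langle A,M\rangle$. After that, both you and the paper use $\|p_A\|_{L^2}\ge\|p_A\|_{L^1}$, $\langle A,M\rangle\ge\lambda_{\min}(M)\Tr A$ and $\Tr A\ge\|A\|_F$.

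For \eqref{ineq:L2-leq-Frobenius} you take a genuinely different route.

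\emph{The paper's route.} It never diagonalizes. It applies Cauchy--Schwarz pointwise, $\langle A,[x]_d[x]_d^{\sf T}\rangle^2\le\|A\|_F^2\,\|[x]_d[x]_d^{\sf T}\|_F^2$. This reduces everything to the single fixed polynomial $\sum_{|\alpha|=d}x^{2\alpha}$ of degree $2d$. It then uses the $p=1$ case of Proposition~\ref{prop:reverse-holder-ineq}, with constant $4^{2d}$, together with $\|\sum_{\alpha}x^{2\alpha}\|_{L^1}=\Tr M\le N(n,d)\lambda_{\max}(M)$.

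\emph{Your route.} You split $p_A=\sum_i\lambda_iq_i^2$ and use the triangle inequality. You apply the $p=4$ case of Proposition~\ref{prop:reverse-holder-ineq} to each degree-$d$ form $q_i$, giving $\|q_i^2\|_{L^2}=\|q_i\|_{L^4}^2\le 4^d\lambda_{\max}(M)$. Cauchy--Schwarz on the eigenvalues finishes the argument.

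\emph{Comparison.} Your argument yields the sharper constant $4^d\sqrt{N(n,d)}\,\lambda_{\max}(M)$, smaller than the stated one by a factor $4^d\sqrt{N(n,d)}$. It also handles both inequalities uniformly through the squares $q_i^2$. The paper's argument is basis-free and needs only one application of the reverse H\"older inequality, to an explicit polynomial. For the use in Theorem~\ref{th:projection-sos} only the bit size of the constant matters, so either version suffices.
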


\begin{proof}
To prove \eqref{ineq:L2-leq-Frobenius}, observe that
\begin{equation*}
\left\|p\right\|_{L^2}^2
= \int_{S^{n-1}} ([x]_d^{\sf T} \, A \, [x]_d)^2 \, \D \sigma(x)
= \int_{S^{n-1}} \langle A, [x]_d [x]_d^{\sf T} \rangle^2 \, \D \sigma(x).
\end{equation*}
By the Cauchy-Schwarz inequality for the trace inner product,
\begin{equation*}
\int_{S^{n-1}}  \langle A, [x]_d [x]_d^{\sf T} \rangle^2 \, \D \sigma(x)
\leq \left\|A\right\|_F^2 \int_{S^{n-1}} \left\|[x]_d [x]_d^{\sf T}\right\|_F^2 \, \D \sigma(x).
\end{equation*}
Since
\begin{equation*}
\left\|[x]_d [x]_d^{\sf T}\right\|_F^2 
= \biggl( \sum\limits_{\lvert \alpha \rvert=d} x^{2\alpha}  \biggr)^2,
\end{equation*}
we see that the integral $\int_{S^{n-1}} \left\|[x]_d [x]_d^{\sf T}\right\|_F^2 \, \D \sigma(x)$ equals the squared $L^2$-norm of the polynomial
$\sum_{\lvert \alpha\rvert=d} x^{2\alpha}$.
Using Proposition~\ref{prop:reverse-holder-ineq}, we bound this quantity in terms of the $L^1$-norm,
\begin{align*}
\left\|A\right\|_F^2 \int_{S^{n-1}}  \left\|[x]_d [x]_d^{\sf T}\right\|_F^2 \, \D \sigma(x)
&\leq \left\|A\right\|_F^2 \, 4^{4d} \, \biggl( \, \int_{S^{n-1}}  \biggl| \sum_{\left|\alpha\right|=d} x^{2\alpha} \biggr| \, \D \sigma(x) \, \biggr)^2\\
&= \left\|A\right\|_F^2 \, 4^{4d} \, \biggl( \, \int_{S^{n-1}}  \sum_{\left|\alpha\right|=d} x^{2\alpha} \, \D \sigma(x) \, \biggr)^2\\
&= \left\|A\right\|_F^2 \, 4^{4d} \, \biggl( \, \sum_{\left|\alpha\right|=d} \, \int_{S^{n-1}}  x^{2\alpha} \, \D \sigma(x) \, \biggr)^2,
\end{align*}
where each integral appearing in the sum is a diagonal entry of the moment matrix $M$. None of the diagonal entries of $M$ exceeds $\lambda_{\max}(M)$, and the claim follows:
\begin{equation*}
\left\|A\right\|_F^2 \, 4^{4d} \, \biggl( \, \sum_{\left|\alpha\right|=d} \, \lambda_{\max}(M) \, \biggr)^2
= \left\|A\right\|_F^2 \, 4^{4d} \, N(n,d)^2 \, \lambda_{\max}(M)^2.
\end{equation*}

To prove \eqref{ineq:Frobenius-leq-L2}, we apply Hölder's inequality,
\begin{equation*}
\left\|p\right\|_{L^2}^2 
= \int_{S^{n-1}} \langle A, [x]_d [x]_d^{\sf T} \rangle^2 \, \D \sigma(x)
\geq \biggl( \int_{S^{n-1}} \left| \langle A, [x]_d [x]_d^{\sf T} \rangle \right| \, \D \sigma(x) \biggr)^2.
\end{equation*}
The matrix $[x]_d [x]_d^{\sf T}$ is positive semidefinite for every $x$. Hence, $\langle A, [x]_d [x]_d^{\sf T} \rangle \geq 0$ because $A$ is positive semidefinite. Therefore,
$\left| \langle\, A, [x]_d [x]_d^{\sf T}\, \rangle \right| = \langle\, A, [x]_d [x]_d^{\sf T}\, \rangle$, and
\begin{equation}
\label{eq:integral-is-trace-with-moment}
\begin{split}
\int_{S^{n-1}} \langle\, A, [x]_d [x]_d^{\sf T} \rangle \, \D \sigma(x)
& = \int_{S^{n-1}} \sum_{\left|\alpha\right|,\left|\beta\right|=d} A_{\alpha,\beta} x^{\alpha+\beta} \, \D \sigma(x)\\
& = \sum_{\left|\alpha\right|,\left|\beta\right|=d} A_{\alpha,\beta} \, \int_{S^{n-1}} \, x^{\alpha+\beta}  \, \D \sigma(x)\\
& = \langle A, M \rangle.
\end{split}
\end{equation}
We have $\langle A , M \rangle \geq \lambda_{\min}(M) \Tr (A)$, and  finally,
\begin{equation*}
  \left\|p\right\|_{L^2}^2 \geq \lambda_{\min}(M)^2 \Tr (A)^2 \geq \lambda_{\min}(M)^2 \left\|A\right\|_F^2. \qedhere
\end{equation*}
\end{proof}

\begin{proof}[Proof of Theorem~\ref{th:projection-sos}]
Let $p$ be a rational, homogeneous polynomial of degree $2d$ normalized so that $\int_{S^{n-1}} p(x) \, \D \sigma(x) = 1$. The following semidefinite program computes a Gram matrix of a sum-of-squares polynomial closest to $p$:
\begin{equation}
\label{Gram-problem}
\begin{split}
\min \quad & \; t\\
\text{such that} \quad & \; \left\|((p - p_A)_\alpha ) \right\| \leq t,\\
& \; \int_{S^{n-1}} p_A(x) \, \D \sigma(x) = 1,\\
& \; p_A = [x]_d^{\sf T} A [x]_d,\\
& \; A \in \mathcal{S}^{N(n,d)}_{+}.
\end{split}
\end{equation}
We shall show that one can solve this semidefinite program in polynomial time, in the sense of~Theorem~\ref{th:sdppolynomialtime}.

We choose a Gram matrix $X_0$ of the polynomial $\omega^d = (x_1^2 + \cdots + x_n^2)^d$ to be the center of the balls $X_0+B(r)$ and $X_0+B(R)$. One can compute such a Gram matrix $X_0$ explicitly by using Hilbert's identity (see, for example, Reznick \cite[Section 8]{Reznick1992}), which states
\[
\int_{S^{n-1}}  (u^{\sf T} x)^{2d} \, \D \sigma(u) 
= B(n,2d) \, \omega^d,
\]
with $B(n,2d)= \prod\limits_{j=0}^{d-1} \dfrac{2j+1}{2j+n}$.
Therefore, 
\begin{equation*}
\omega^d
= \sum\limits_{\lvert \alpha \rvert, \lvert \beta \rvert = d}
x^\alpha \, \left( B(n,2d)^{-1} \, \frac{(d!)^2}{\alpha! \, \beta!} \, \int_{S^{n-1}} u^{\alpha+ \beta} \, \D \sigma(u) \right) \, x^\beta.
\end{equation*}
For multi-indices $\alpha, \beta$ with $\lvert \alpha \rvert,\lvert \beta \rvert =d$,
we define the matrix entry
\begin{equation*}
  [X_0]_{\alpha, \beta} = B(n,2d)^{-1} \, \frac{(d!)^2}{\alpha! \, \beta!} \, \int_{S^{n-1}} u^{\alpha+ \beta} \, \D \sigma(u),
\end{equation*}
so that $\omega^d = [x]_d^{\sf T} X_0 [x]_d$.

Again, suitable radii $r$ and $R$ are given by the Loewner-John ellipsoid computation of Blekherman. The radii of the corresponding balls in Proposition~\ref{prop:blekherman-inner-ball} and Proposition~\ref{prop:blekherman-outer-ball} are given in the $L^2$-norm on $\R[x]_{2d}$, but through Lemma~\ref{lem:relation-L2-Frobenius-norm} one can translate them efficiently to the Frobenius norm on $\mathcal{S}^{N(n,d)}$. Hence, Theorem~\ref{th:sdppolynomialtime} is applicable for the semidefinite program above.
\end{proof}

\section*{Acknowledgements}

V.M. and F.V. would like to thank the organizers of the Lorentz Center workshop \textit{Semidefinite Programming: Applications \& Solution Methods} (July 7--11, 2025)---David de Laat, Fernando M. de Oliveira Filho, and Angelika Wiegele---during which the collaboration that resulted into this paper was initiated. 
This work has been supported by European Union’s HORIZON–MSCA-2023-DN-JD programme under the Horizon Europe (HORIZON) Marie Skłodowska-Curie Actions, grant agreement 101120296 (TENORS).

\end{document}